\newcommand{\R}{\mathbb{R}}
\newcommand{\Z}{\mathbb{Z}}
\newcommand{\Q}{\mathbb{Q}}
\newcommand{\C}{\mathbb{C}}
\newcommand{\sheaf}{\mathcal{O}}
\newcommand{\liealg}[1]{\mathfrak{#1}}
\newcommand{\skal}[2]{\langle{#1},{#2}\rangle}
\newtheorem{theorem}{Theorem}[section]
\newtheorem{lemma}[theorem]{Lemma}
\newtheorem{proposition}[theorem]{Proposition}
\newtheorem{corollary}[theorem]{Corollary}
\theoremstyle{definition}
\newtheorem{definition}[theorem]{Definition}
\newtheorem{example}[theorem]{Example}
\theoremstyle{remark}
\newtheorem{remark}[theorem]{Remark}
\numberwithin{equation}{section}
\begin{document}
%
% \title[short text for running head]{full title}
\title[Lorentzian manifolds with indecomposable holonomy groups]{A class of Lorentzian manifolds with indecomposable holonomy groups}
%
%    Only \author and \address are required; other information is
%    optional.  Remove any unused author tags.
%
%    author one information
% \author[short version for running head]{name for top of paper}
\author{Kordian L\"{a}rz}
\address{Humboldt-Universit\"{a}t Berlin, Institut f\"{u}r Mathematik R. 1.305, Rudower Chaussee 25, 12489 Berlin}
%\curraddr{}
\email{laerz@math.hu-berlin.de}
%\thanks{}
%
%    author two information
%\curraddr{}
%\thanks{}
%
%    \subjclass is required.
\subjclass[2000]{53C29, 53C50}
\keywords{Lorentzian geometry, holonomy group}
%
%\date{}
%\dedicatory{}
%
%    The "communicated by" line appears only in ERA, PROC and JAG.
%\commby{}
%
%    Abstract is required.
\begin{abstract}
	We consider a class of $S^{1}$-bundles whose total space admits a nowhere vanishing recurrent lightlike vector field with
	respect to a Lorentzian metric. This metric can be modified such that its restricted holonomy group is indecomposable
	and reducible. We apply Hodge theory to construct examples with Hermitian screen holonomy. Finally, we construct complete pp-waves.
\end{abstract}
\maketitle
%
%-------------------------------------------------------------------------------------------------------------------------------------
%    Text of article.
%-------------------------------------------------------------------------------------------------------------------------------------
\section{Introduction}
For a Lorentzian manifold $(X,g)$ of dimension $n+2$ let $Hol^{0}(X,g)$ be the connected component of its holonomy group.
By Wu's theorem \cite{MR867684} $(X,g)$ is locally a product of semi-Riemannian manifolds if its holonomy representation decomposes. Therefore, we may
focus on Lorentzian manifolds with indecomposable restricted holonomy groups. In contrast to the positive definite case
$Hol^{0}(X,g) \subset SO_{0}(1,n+1)$ does not need to be irreducible if it is indecomposable. In fact $SO_{0}(1,n+1)$ is the only connected
irreducible subgroup of $SO_{0}(1,n+1)$ (see \cite{MR1836778}).\par
The action of a reducible indecomposable subgroup of $SO_{0}(1,n+1)$ on $\R^{1,n+1}$ leaves a degenerate subspace $W$ invariant and we
get an invariant lightlike line $W \cap W^{\perp}$. Under the action of $Hol^{0}(X,g)$ this corresponds locally to a lightlike subbundle
$\Xi \subset TX$ of rank one which is spanned by a non-vanishing recurrent lightlike vector field\footnote{Here
we say that a vector field $V$ is recurrent if $\nabla_{\cdot}V = \alpha \otimes V$ for some 1-form $\alpha$ where $\nabla$ is the
Levi-Civita connection of $(X,g)$.}.
If $v$ is a lightlike vector in $\R^{1,n+1}$ spanning the invariant line, then
$Hol^{0}(X,g) \subset Stab(\R \cdot v) \subset SO_{0}(1,n+1)$. It can be shown that
$Stab(\R \cdot v) \cong (\R^{*} \times SO(n)) \ltimes \R^{n}$. If we choose a basis $(v,e_{1},\ldots,e_{n},z)$ of $\R^{n+2}$
satisfying $g(e_{i},e_{j})=\delta_{ij}$, $g(v,z)=1$ and $g(v,v)=g(z,z)=0$ then the Lie algebra
$(\R \oplus \liealg{so}(n)) \ltimes \R^{n}$ of $(\R^{*} \times SO(n)) \ltimes \R^{n}$ is given by
\begin{equation*}
	\left\lbrace \begin{pmatrix} a & w^{T} & 0\\ 0 & A & -w\\ 0 & 0 & -a \end{pmatrix}
								: a \in \R,~A \in \liealg{so}(n),~w \in \R^{n} \right\rbrace.
\end{equation*}
Being a subalgebra of a compact Lie algebra the projection
\begin{equation*}
	\liealg{g} := pr_{\liealg{so}(n)}(\liealg{h}) \subset \liealg{so}(n)
\end{equation*}
is compact and therefore reductive. Hence, $\liealg{g} = \liealg{z}(\liealg{g})+[\liealg{g},\liealg{g}]$ where
$\liealg{z}(\liealg{g})$ is the center of $\liealg{g}$.
\begin{theorem}[B{\'e}rard-Bergery \& Ikemakhen \cite{MR1216527}, Leistner \cite{MR2331527}]\label{BBILthm}
	Let $\liealg{h}$ be an indecomposable Lorentzian holonomy algebra.
	\begin{enumerate}
		\item \noindent
		Then $\liealg{h}$ belongs to one of the following types:
		\begin{itemize}
			\item \noindent
			Type 1: $\liealg{h} = (\R \oplus \liealg{g}) \ltimes \R^{n}$
			\item \noindent
			Type 2: $\liealg{h} = \liealg{g} \ltimes \R^{n}$
			\item \noindent
			Type 3:
			\begin{equation*}
				\liealg{h} = \left\lbrace \begin{pmatrix}
								\varphi(A) & w^T & 0\\
								0 & A & w\\
								0 & 0 & -\varphi(A)
							\end{pmatrix}: A \in \liealg{g},~w \in \R^{n} \right\rbrace
			\end{equation*}
			where $\varphi: \liealg{g} \twoheadrightarrow \R$ is an epimorphism satisfying
			$\varphi|_{[\liealg{g},\liealg{g}]}=0$.
			\item \noindent
			Type 4:
			There is $0< \ell < n$ such that $\R^{n} = \R^{\ell} \oplus \R^{n-\ell}$,
			$\liealg{g} \subset \liealg{so}(\ell)$ and
			\begin{equation*}
				\liealg{h} = \left\lbrace \begin{pmatrix}
								0 & \psi(A)^T & w^T & 0\\
								0 & 0 & 0 & -\psi(A)\\
								0 & 0 & A & -w\\
								0 & 0 & 0 & 0
							\end{pmatrix}: A \in \liealg{g},~w \in \R^{\ell}  \right\rbrace
			\end{equation*}
			for some epimorphism $\psi: \liealg{g} \twoheadrightarrow \R^{n-\ell}$ satisfying
			$\psi|_{[\liealg{g},\liealg{g}]}=0$.
		\end{itemize}
		\item \noindent
		The projection $\liealg{g}=pr_{\liealg{so}(n)}(\liealg{h})$ is the holonomy algebra of a Riemannian manifold.\qed
	\end{enumerate}
\end{theorem}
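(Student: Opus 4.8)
The plan is to establish the two assertions separately, following B{\'e}rard-Bergery and Ikemakhen \cite{MR1216527} for the classification into the four types and Leistner \cite{MR2331527} for the statement about $\liealg{g}$; here I only describe the shape of the arguments.

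For (1) I would discard the geometry and solve the purely algebraic problem: classify, up to conjugacy in $Stab(\R\cdot v)$, the subalgebras $\liealg{h}\subseteq(\R\oplus\liealg{so}(n))\ltimes\R^n$ whose representation on $\R^{1,n+1}$ is \emph{weakly irreducible}, i.e.\ admits no proper non-degenerate invariant subspace; by Wu's theorem \cite{MR867684} this is exactly what ``indecomposable'' means for a reducible holonomy algebra. The one structural input is that $\liealg{g}=pr_{\liealg{so}(n)}(\liealg{h})$ is compact, hence reductive and acting completely reducibly on $\R^n$. I would organize the case analysis around the subspace $\liealg{m}:=\liealg{h}\cap\R^n$: a short bracket computation in the matrix model gives $[(a,A,w),(0,0,u)]=(0,0,(A+a\,\mathrm{id})u)$, so $\liealg{m}$ is $\liealg{g}$-invariant, and compactness yields a $\liealg{g}$-invariant splitting $\R^n=\liealg{m}\oplus\liealg{m}^{\perp}$. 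If $\liealg{m}=\R^n$ then $\liealg{h}=\liealg{h}_0\oplus\R^n$ with $\liealg{h}_0:=\liealg{h}\cap(\R\oplus\liealg{so}(n))$ a subalgebra surjecting onto $\liealg{g}$; if $\liealg{h}_0$ contains the scalar matrix it equals $\R\oplus\liealg{g}$, giving Type 1, and otherwise it is the graph $\{(\varphi(A),A):A\in\liealg{g}\}$ of a linear functional $\varphi\colon\liealg{g}\to\R$ that vanishes on $[\liealg{g},\liealg{g}]$ (because $\R$ is central in $\R\oplus\liealg{g}$), giving Type 2 if $\varphi=0$ and Type 3 if $\varphi\neq0$. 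If $\liealg{m}\subsetneq\R^n$, one shows weak irreducibility forces the $\R$-part of $\liealg{h}$ to vanish and $\liealg{g}$ to act trivially on $\liealg{m}^{\perp}$, so $\liealg{g}\subseteq\liealg{so}(\ell)$ with $\R^{\ell}=\liealg{m}$; the remaining off-diagonal data is then a linear map $\psi\colon\liealg{g}\to\liealg{m}^{\perp}=\R^{n-\ell}$, again vanishing on $[\liealg{g},\liealg{g}]$ and, by weak irreducibility, surjective --- this is Type 4. I expect the main obstacle in this half to be the bookkeeping that excludes every other configuration: in each case one must either produce an explicit proper non-degenerate invariant subspace of $\R^{1,n+1}$ assembled from $v$, $z$, and pieces of $\R^n$, or verify that none exists for the proposed normal form, and one must confirm the surjectivity of $\varphi$ and $\psi$.

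For (2) I would exploit the first Bianchi identity. Let $\mathcal{K}(\liealg{h})$ denote the space of algebraic curvature tensors $R\colon\Lambda^2\R^{1,n+1}\to\liealg{h}$ satisfying the first Bianchi identity; by the Ambrose--Singer theorem the holonomy algebra of a manifold is a \emph{Berger algebra}, i.e.\ $\liealg{h}=\mathrm{span}\{R(x,y):R\in\mathcal{K}(\liealg{h})\}$. The plan is to show that composing the $R\in\mathcal{K}(\liealg{h})$ with $pr_{\liealg{so}(n)}$ and restricting suitably to the screen directions $\R^n$ produces a family of $\liealg{g}$-valued $1$-forms on $\R^n$ which span $\liealg{g}$ and satisfy the cyclic relation defining a \emph{weak-Berger algebra} in Leistner's sense; one then invokes Leistner's theorem that every weak-Berger subalgebra of $\liealg{so}(n)$ is already a Berger algebra, so that by the Berger--Simons classification of Riemannian holonomy it is the holonomy algebra of a Riemannian manifold. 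The hard part here is twofold: extracting the weak-Berger property of $\liealg{g}$ from the Bianchi identity for $\liealg{h}$-valued curvature tensors --- the non-formal step, amounting to the statement that the obstruction carried by the null directions $v,z$ vanishes --- and Leistner's theorem itself, whose proof is a long representation-theoretic reduction running through all of Berger's list and the Riemannian symmetric spaces.
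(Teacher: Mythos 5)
The paper gives no proof of this theorem: it is imported verbatim from B\'erard-Bergery--Ikemakhen and Leistner with a bare citation (note the \qed attached to the statement), so there is no in-paper argument to measure you against. Your outline is nevertheless a faithful description of how the cited proofs actually run, and the reductions you do carry out are correct: the bracket formula $[(a,A,w),(0,0,u)]=(0,0,(A+a\,\mathrm{id})u)$ holds in the matrix model and does show that $\liealg{m}=\liealg{h}\cap\R^n$ is $\liealg{g}$-invariant; in the case $\liealg{m}=\R^n$ the dichotomy ``$\liealg{h}_0$ contains the grading element or is the graph of a functional $\varphi$ killing $[\liealg{g},\liealg{g}]$'' is exactly the B\'erard-Bergery--Ikemakhen normal form for Types 1--3. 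The two places where the real content sits are exactly the ones you flag but do not supply: in part (1), the case $\liealg{m}\subsetneq\R^n$, where one must produce explicit non-degenerate invariant subspaces of $\R^{1,n+1}$ to show that weak irreducibility kills the $\R$-component, forces $\liealg{g}$ to act trivially on $\liealg{m}^{\perp}$, makes the $\liealg{m}^{\perp}$-part of the translations a well-defined linear map $\psi$ of $A$ alone, and forces $\psi$ (and $\varphi$ in Type 3) to be surjective; and in part (2), Leistner's theorem that every weak-Berger subalgebra of $\liealg{so}(n)$ is a Riemannian holonomy algebra, which is the genuinely deep input and can only be cited, not reproduced. You correctly separate the logical roles of the two halves --- (1) is pure linear algebra valid for any weakly irreducible subalgebra of the parabolic, while (2) uses that $\liealg{h}$ is a holonomy algebra via Ambrose--Singer --- and you are right that the statement implicitly excludes the irreducible case $\liealg{h}=\liealg{so}(1,n+1)$, consistent with the paper's standing assumption of reducibility. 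As a roadmap the proposal is sound; as a proof it defers precisely the steps that make the theorem a theorem.
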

All possible holonomy groups can be constructed by Lorentzian metrics on $\R^{n+2}$ (see \cite{MR2264404}). The problem
to construct topologically nontrivial examples with certain properties is largely open.
A first approach has been made in \cite{MR2350042} where non-trivial globally hyperbolic examples of the form $X=\R^{2} \times M^{n}$
have been derived. Recently, globally hyperbolic examples for most Lorentzian holonomies have been found in \cite{bazaikin-2009}.
\par \bigskip
In this paper we study a class of Lorentzian metrics on the total space of $S^1$-bundles whose holonomy
group is reducible but indecomposable. All manifolds are assumed to be connected without boundary.
\section{Lorentzian manifolds with indecomposable holonomy group}
\subsection{Global properties}\hfill \par
In the following let $(X,g)$ be a Lorentzian manifold whose (full) holonomy group is indecomposable and reducible.
The screen bundle $\mathcal{S}$ of $(X,g)$ is defined as $\mathcal{S} = \mbox{Coker}(\Xi \hookrightarrow \Xi^{\perp})$. From
$\nabla^{(X,g)}$ we derive the induced connection $\nabla^{\mathcal{S}}$ on $\mathcal{S}$. Since
$Hol(X,g) \subset (\R^{*} \times O(n)) \ltimes \R^{n}$ we may define $G:=pr_{O(n)}(Hol(X,g))$. It can be shown that 
$Hol(\mathcal{S},\nabla^{\mathcal{S}}) = G$ and $\liealg{hol}(\mathcal{S},\nabla^{\mathcal{S}})=\liealg{g}$ \cite{MR2435292}.\par
In order to study the geometry of $(\mathcal{S},\nabla^{\mathcal{S}})$ it is convenient to study a non-canonical realization of $\mathcal{S}$ as a
distribution in $TX$ given by a non-canonical splitting $s$ of the exact sequence
\begin{equation*}
	\xymatrix{ 0 \ar[r] & \Xi \ar[r] & \Xi^{\perp} \ar[r] & \mathcal{S} \ar@/_/[l]_{s} \ar[r] & 0}.
\end{equation*}
Hence, we may define $S:= s(\mathcal{S})$. Since $\Xi \subset S^{\perp}$ there is a uniquely defined isotropic distribution $\Theta \subset S^{\perp}$ of rank
one with the following property:
If $V \in \Gamma(U\subset X,\Xi)$ then there exists $Z \in \Gamma(U\subset X,\Theta)$ such that $g(V,Z)=1$.\par
The Levi-Civita connection on $(X,g)$ induces connections\footnote{The relation of the curvatures of $\nabla^{\Xi}$ and $\nabla^{S}$ to the holonomy of $(X,g)$
												has been studied in \cite{bezvitnaya-2005}.} on the subbundles $\Xi$ and $S$ given by
\begin{equation*}
	\nabla^{\Xi} := pr_{\Xi} \circ \nabla|_{\Xi} \qquad \text{and} \qquad \nabla^{S} := pr_{S} \circ \nabla|_{S}.
\end{equation*}
Moreover, the canonical bundle morphism $S \stackrel{F}{\longrightarrow} \mathcal{S}$ is easily shown to be an isomorphism such that
$\nabla^{S}=F^{*}\nabla^{\mathcal{S}}$, i.e., $Hol(S,\nabla^{S})=G$.
\begin{definition}
	A manifold $X^{n}$ is totally twisted if there is no homeomorphism $X \rightarrow \R \times Y$ or $X \rightarrow S^{1} \times Y$
	where $Y$ is of dimension $n-1$.
\end{definition}
In the next section we will construct totally twisted Lorentzian manifolds whose (full) holonomy representation is indecomposable and reducible such that
$Hol^{0}(X,g)$ is of type 1 or 2. In the non-compact case the first Betti number for these manifolds will be zero.
\subsection{Local properties}\hfill \par
If $Hol^{0}(X,g)$ is indecomposable and reducible we have a locally defined recurrent lightlike vector field $V$ around $p \in X$. It is shown in
\cite{MR0035085} that we can find local coordinates $(x,y^{1},\ldots,y^{n},z)$ in $U \ni p$ such that
\begin{equation*}
	g = 2dxdz + u_{i}dy^{i}dz + fdz^{2} + g_{\alpha\beta}dy^{\alpha}dy^{\beta}
\end{equation*}
and $\frac{\partial}{\partial x} \in \Xi$ on $U$ where $u_{i},f \in C^{\infty}(U)$ and
$\frac{\partial u_{i}}{\partial x}=\frac{g_{\alpha\beta}}{\partial x}=0$.
Local coordinates of this form will be called Walker coordinates.
\begin{proposition}
	Let $(X,g)$ be a Lorentzian manifold such that $\liealg{hol}^{loc}_{p}(X,g)$ indecomposable and reducible for all $p \in X$.
	Then
	\begin{enumerate}
		\item
		$Hol(X,g)$ is indecomposable and reducible.
		\item
		$\Xi$ admits a global nowhere vanishing section if and only if $(X,g)$ is time-orientable.
		\item
		$Hol^{0}(X,g)$ is of type 2 or 4 if and only if there is a $\liealg{hol}^{loc}_{p}(X,g)$-invariant non-zero vector for all $p \in X$.
	\end{enumerate}
\end{proposition}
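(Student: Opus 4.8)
First I would isolate the one fact of Lorentzian linear algebra that drives all three parts: for $n\ge 1$, an indecomposable reducible subalgebra $\liealg{h}\subset\liealg{so}(1,n+1)$ stabilises a \emph{unique} lightlike line $L$, and every nonzero $\liealg{h}$-fixed vector spans $L$. Indeed, an $\liealg{h}$-invariant non-lightlike line, or two distinct $\liealg{h}$-invariant lightlike lines, would produce an $\liealg{h}$-invariant non-degenerate plane whose orthogonal complement is non-degenerate and nonzero, contradicting indecomposability; and the line through a fixed vector is invariant, hence lightlike, hence equal to $L$. For \emph{(1)}: around each $p$ the Walker coordinates furnish a recurrent lightlike vector field $V$ near $p$, so $\R V$ is $\nabla$-invariant and therefore, for $q$ near $p$, invariant under $\liealg{hol}^{loc}_q$; by uniqueness it is the canonical $\liealg{hol}^{loc}_q$-invariant lightlike line. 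Hence these locally defined lines agree on overlaps and glue to a global lightlike line subbundle $\Xi\subset TX$, parallel because locally $\nabla_{\cdot}\Gamma(\Xi)\subset\Gamma(\Xi)$. Then $Hol(X,g)$ preserves the degenerate subbundle $\Xi$, so it is reducible; and if it preserved a proper non-degenerate $W\subsetneq T_pX$, then so would $\liealg{hol}^{loc}_p\subset\liealg{hol}(X,g)$, contradicting indecomposability of $\liealg{hol}^{loc}_p$. So $Hol(X,g)$ is indecomposable.

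For \emph{(2)}: by (1) the object $\Xi$ is an honest real line bundle on $X$, hence carries a nowhere-vanishing section iff it is trivial. A nowhere-vanishing section of $\Xi$ is a nowhere-vanishing lightlike --- in particular causal --- vector field, and a nowhere-vanishing causal vector field continuously selects one of the two connected components of the set of nonzero causal vectors in each tangent space, i.e.\ a time-orientation of $(X,g)$. Conversely a time-orientation picks out the future-pointing generator of $\Xi_p$ in each fibre, and these generators form a nowhere-vanishing section of $\Xi$. The two conditions are therefore equivalent.

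For \emph{(3)}: by the observation above, $\liealg{hol}^{loc}_p$ has a nonzero fixed vector iff it acts trivially on $\Xi_p$. The curvature $R^{\Xi}$ of the induced connection $\nabla^{\Xi}$ on $\Xi$ satisfies $R^{\Xi}_p(u,w)=R_p(u,w)|_{\Xi_p}\in\liealg{hol}^{loc}_p|_{\Xi_p}$ (curvature endomorphisms at $p$ lie in the local holonomy algebra at $p$); conversely, since $\Xi$ is parallel, restriction to $\Xi_p$ is a Lie-algebra homomorphism on $\liealg{hol}^{loc}_p$ taking the transported curvature operators that generate $\liealg{hol}^{loc}_p$ to transported values of $R^{\Xi}$. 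Hence $\liealg{hol}^{loc}_p|_{\Xi_p}=0$ for all $p$ $\iff$ $R^{\Xi}\equiv 0$ $\iff$ $\nabla^{\Xi}$ is flat $\iff$ $Hol^{0}(X,g)$ acts trivially on $\Xi$. (Equivalently, when $\nabla^{\Xi}$ is flat one may rescale the locally recurrent $V$ to a local parallel lightlike field, which exhibits the fixed vector directly.) Finally, inspecting the four cases of Theorem~\ref{BBILthm}, the action on the invariant line $\R v=\Xi$ is multiplication by $a$ in Type~1 and by $\varphi(A)$ in Type~3 --- nonzero in each since $\varphi$ is an epimorphism onto $\R$ --- and trivial in Types~2 and~4. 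Thus $Hol^{0}(X,g)$ acts trivially on $\Xi$ precisely in Types~2 and~4, which is the assertion.

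The hard part, I expect, is the middle of (3): checking carefully that $R_p(u,w)\in\liealg{hol}^{loc}_p$, that after restriction to the parallel line $\Xi$ these operators generate $\liealg{hol}^{loc}_p|_{\Xi_p}$, and that the ensuing triviality along $\Xi$ propagates from all the local holonomy algebras to the global restricted holonomy $Hol^{0}(X,g)$ so Theorem~\ref{BBILthm} can be invoked. Once uniqueness of the invariant lightlike line is available, the gluing in (1) and the causal-cone argument in (2) are routine.
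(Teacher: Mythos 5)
Your proof is correct and follows essentially the same route as the paper: glue the locally defined invariant lightlike lines (unique by indecomposability) into a parallel line bundle $\Xi$ for (1), trivialize $\Xi$ via a time-orientation for (2), and characterize types 2 and 4 by triviality of the holonomy action on $\Xi$ for (3). The only cosmetic difference is in (3), where you phrase the criterion as flatness of $\nabla^{\Xi}$ while the paper phrases it as closedness of the recurrence $1$-form $\alpha$ of a global section of $\Xi$ --- the same condition, since $R^{\nabla^{\Xi}}=d\alpha\otimes\mathrm{id}$ in a local recurrent frame --- and your write-up supplies the converse direction and the explicit link to Theorem~\ref{BBILthm} that the paper leaves implicit.
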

\begin{proof}
	Since $\liealg{hol}^{loc}_{p}(X,g)=\liealg{hol}_{p}(U_{\alpha},g|_{U_{\alpha}})$ for some neighborhood $U_{\alpha} \ni p$ we have
	$\Xi^{U_{\alpha}}|_{U_{\alpha}\cap U_{\beta}} = \Xi^{U_{\beta}}|_{U_{\alpha} \cap U_{\beta}}$,
	i.e., there is a $Hol(X,g)$-invariant isotropic distribution on $X$.\par
	If $(X,g)$ is time-orientable we may locally choose future pointing sections $V_{\alpha} \in \Gamma(U_{\alpha},\Xi)$ and use a partition of
	unity. Conversely, if $\Xi$ admits a global nowhere vanishing section $V$, so does $\Theta$. If $Z \in \Gamma(X,\Theta)$ denotes this section
	then $\frac{1}{\sqrt{2}}(V-Z)$ is a timelike unit vector field.\par
	For the last statement assume $\nabla_{\cdot}V^{U_{\alpha}}=0$ for some local sections $V^{U_{\alpha}} \in \Gamma(U_{\alpha},\Xi)$.
	If $V \in \Gamma(X,\Xi)$ is nowhere vanishing then $V|_{U_{\alpha}}=\lambda^{U_{\alpha}}V^{U_{\alpha}}$ and
	\begin{equation*}
		\nabla_{\cdot}V|_{U_{\alpha}} = d(\log(\lambda^{U_{\alpha}}))(\cdot)\lambda^{U_{\alpha}}V^{U_{\alpha}}
						= d(\log(\lambda^{U_{\alpha}}))(\cdot)V|_{U_{\alpha}}.
	\end{equation*}
	By construction $d(\log(\lambda^{U_{\alpha}}))=d(\log(\lambda^{U_{\beta}}))$ on $U_{\alpha} \cap U_{\beta}$, i.e.,
	$\nabla_{\cdot}V = \alpha(\cdot)V$ for some closed $1$-form $\alpha$.
\end{proof}
However, the existence of a covering of $X$ by indecomposable Walker coordinates does not imply reducibility
of $Hol^{0}(X,g)$.\footnote{A counterexample can be constructed as follows: Let $f_{1},f_{2} \in C^{\infty}(\R)$ such that
				$f_{1}|_{]-\infty,-1]}=f_{2}|_{[1,\infty[}=1$ and $f_{1}|_{[-\frac{1}{2},\infty[}=f_{2}|_{]-\infty,\frac{1}{2}]}=0$.
				On $\R^{3}$ we define $g=2dxdz + y^{2}f_{1}(z)dz^{2} + y^{2}f_{2}(z)dx^{2} + dy^{2}$.
				Then $Hol^{0}(\R^{3},g)=SO_{0}(1,2)$.}
For any given Walker coordinates an integrable realization of the screen bundle is given by $S:= \mbox{span}\{\frac{\partial}{\partial y^{\alpha}}\}$.
In this case $\Xi = \mbox{span}\{\frac{\partial}{\partial x}\}$ and $\Theta:= \mbox{span}\{Z\}$ with
$Z:= \frac{1}{2}(g^{\alpha\beta}u_{\alpha}u_{\beta} -f)\frac{\partial}{\partial x}
		-g^{\alpha\beta}u_{\alpha}\frac{\partial}{\partial y^{\beta}} + \frac{\partial}{\partial z}$. In particular, the parallel
transport equations immediately imply $Hol(M_{xz},\nabla^{M_{xz}}) \subset Hol(S,\nabla^{S})$ where $M_{xz}$ is the Riemannian submanifold
given by the $\frac{\partial}{\partial y^{\alpha}}$-coordinates.\footnote{Note however, that all indecomposable Lorentzian holonomies have been realized
											in \cite{MR2264404} by Walker coordinates for which
											$Hol(M_{xz},\nabla^{M_{xz}})=0$.}

\section{The total space of an $S^{1}$-bundle as a Lorentzian manifold}
First, we will construct a Lorentzian metric on the total space of an $S^{1}$-bundle over a base manifold admitting a nowhere vanishing closed 1-form.
Under this metric the vertical vector field on the total space becomes recurrent. Then we will give conditions under which the restricted holonomy
representation becomes indecomposable. The idea is based on the following well known observation:
The exact sequence of sheaves
\begin{equation*}
	0 \longrightarrow \Z \longrightarrow \mathcal{C}^{\infty}_{M} \stackrel{\exp}{\longrightarrow}
	{\mathcal{C}^{\infty}_{M}}^{*} \longrightarrow 0
\end{equation*}
provides the isomorphism
\begin{equation*}
	c_{1}: \{\mbox{iso. classes of complex line bundles on }M \}  \rightarrow  H^2(M,\Z).
\end{equation*}
As usual we write $[\frac{\psi}{2\pi}] \in H^{2}(M,\Z)$ if $\psi$ is a closed 2-form and
$[\frac{\psi}{2\pi}] \in \mbox{Im}(H^{2}(M,\Z) \rightarrow H^{2}_{dR}(M,\R))$.
We state the main construction method:
\begin{proposition}\label{mainconstr}
	Let $(M,g)$ be a Riemannian manifold and $\eta$ a nowhere vanishing closed 1-form on $M$. Let $\psi$ be a
	2-form on $M$ with $[\frac{\psi}{2\pi}] \in H^2(M,\Z)$. Then there exists an $S^{1}$-bundle $\pi: X \longrightarrow M$
	satisfying $c_{1}(X \rightarrow M) = [\frac{\psi}{2\pi}]$ and
	\begin{enumerate}
		\item \noindent
		There is a global non-vanishing 1-form $\theta$ on $X$ such that
		\begin{equation*}
			\tilde{g}_{f} := 2\theta \pi^{*}\eta + f \cdot \pi^{*}\eta^{2} + \pi^{*}g
		\end{equation*}
		defines a Lorentzian metric on $X$ for any $f \in C^{\infty}(X)$.
		\item \noindent
		Given $p \in X$ and a local 1-form $\phi$ with $\psi = d\phi$ there are local coordinates
		$(x,y^{1},\ldots,y^{n},z)$ around $p$ such that
		\small{
		\begin{equation*}
			\qquad \tilde{g}_{f} = 2dxdz + (u_{i} + 2g_{i(n+1)})dy^{i}dz + (f+\frac{u_{n+1}}{2}+g_{(n+1),(n+1)})dz^{2} + g_{ij}dy^{i}dy^{j}
		\end{equation*}}
		where $2\phi = u_{i}dy^{i}+u_{n+1}dz$.
		\item \noindent
		The $U(1)$-action of $X \rightarrow M$ acts by isometries on $(X,\tilde{g}_{f})$ if $f$ is constant on the fibers.
		\item \noindent
		The vertical vector field is a global lightlike vector field which is parallel if and only if
		$f$ is constant on the fibers.
	\end{enumerate}
\end{proposition}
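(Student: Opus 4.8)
The plan is to realise $X$ as the total space of a principal $S^{1}$-bundle carrying a connection whose curvature equals $\pi^{*}\psi$ on the nose, and then to read the four assertions off this structure. First I would invoke the isomorphism $c_{1}$ recalled above: the hypothesis $[\tfrac{\psi}{2\pi}]\in H^{2}(M,\Z)$ yields a principal $S^{1}$-bundle $\pi\colon X\to M$ with $c_{1}(X\to M)=[\tfrac{\psi}{2\pi}]$. Starting from any principal connection $1$-form $\theta_{0}$, whose curvature $F_{0}$ (defined by $d\theta_{0}=\pi^{*}F_{0}$) satisfies $[F_{0}]=[\psi]$ in de Rham cohomology, I would write $F_{0}-\psi=d\gamma$ and replace $\theta_{0}$ by $\theta:=\theta_{0}-\pi^{*}\gamma$, still a principal connection form but now with $d\theta=\pi^{*}\psi$; it is nowhere vanishing since $\theta(\xi)=1$ on the fundamental vector field $\xi$ of the action. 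This $\theta$ is the $1$-form of the statement. Observing that $\pi^{*}\eta$ and $\pi^{*}g$ annihilate $\xi$ while $\theta(\xi)=1$, one gets $\tilde g_{f}(\xi,\cdot)=\pi^{*}\eta$ for every $f$, so in particular $\tilde g_{f}(\xi,\xi)=0$. For the signature in (1), I would fix $q\in X$, pick a $g$-orthonormal basis $e_{1},\dots,e_{n+1}$ of $T_{\pi(q)}M$ with $\eta(e_{1})=c>0$ and $\eta(e_{i})=0$ for $i\ge2$, lift to $\tilde e_{i}\in\ker\theta_{q}$, and compute the Gram matrix of $\tilde g_{f}$ in $(\xi,\tilde e_{1},\dots,\tilde e_{n+1})$: it is block-diagonal with an $n\times n$ identity block and a block $\left(\begin{smallmatrix}0&c\\ c&f(q)c^{2}+1\end{smallmatrix}\right)$ of determinant $-c^{2}<0$, so $\tilde g_{f}$ is Lorentzian for every $f\in C^{\infty}(X)$.

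For (2) I would pass to a local trivialisation $X|_{U}\cong U\times S^{1}$ over a coordinate chart $U\subset M$ on which $\eta=dz$ (available since $\eta$ is closed and nowhere vanishing), completing $z$ to coordinates $(y^{1},\dots,y^{n},z)$ of $M$. There $\theta=d\varphi+\pi^{*}\alpha$ with $d\alpha=\psi$; given $\phi$ with $d\phi=\psi$ one has $\alpha-\phi=dk$, so the fibre-coordinate change $x:=\varphi+\pi^{*}k$ brings $\theta$ into the form $dx+\pi^{*}\phi$. Substituting $\theta=dx+\pi^{*}\phi$, $\pi^{*}\eta=dz$ and $\pi^{*}g=g_{ij}dy^{i}dy^{j}$ (indices over $y^{1},\dots,y^{n},z$) into $\tilde g_{f}=2\theta\,\pi^{*}\eta+f\,\pi^{*}\eta^{2}+\pi^{*}g$ and collecting terms then yields the displayed expression, $u_{i}dy^{i}+u_{n+1}dz=2\phi$ being just the names of the components.

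For (3) I would use that a principal connection form for an abelian structure group is invariant under the group action, $R_{a}^{*}\theta=\theta$, and that $\pi^{*}\eta,\pi^{*}g$, being pulled back from $M$, are $S^{1}$-invariant; hence $R_{a}^{*}\tilde g_{f}-\tilde g_{f}=(R_{a}^{*}f-f)\,\pi^{*}\eta^{2}$ vanishes for all $a$ once $f$ is fibrewise constant. For (4): $\xi$ is global and nowhere vanishing because the action is free, $\tilde g_{f}(\xi,\xi)=0$, and its metric dual is $\xi^{\flat}=\pi^{*}\eta$, which is closed. From $\iota_{\xi}\theta=1$ and $\iota_{\xi}d\theta=\iota_{\xi}\pi^{*}\psi=0$ one gets $\mathcal{L}_{\xi}\theta=0$, and similarly $\mathcal{L}_{\xi}\pi^{*}\eta=\mathcal{L}_{\xi}\pi^{*}g=0$, so $\mathcal{L}_{\xi}\tilde g_{f}=(\xi f)\,\pi^{*}\eta^{2}$. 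Feeding $\xi^{\flat}=\pi^{*}\eta$ (closed) and this Lie derivative into the standard identity $\tilde g_{f}(\nabla_{Y}\xi,W)=\tfrac12\big(d\xi^{\flat}+\mathcal{L}_{\xi}\tilde g_{f}\big)(Y,W)$ gives $\nabla_{Y}\xi=\tfrac12\,(\xi f)\,\pi^{*}\eta(Y)\,\xi$; so $\xi$ is always recurrent, and $\nabla\xi=0$ iff $\xi f\equiv0$, i.e. iff $f$ is constant on the fibres.

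The conceptual content is light, so the main obstacle I anticipate is purely organisational: carrying out (2) carefully — simultaneously normalising the local connection form to $dx+\pi^{*}\phi$ and the chart so that $\eta=dz$, and keeping track of the factors of $2$ in the symmetric products so the coefficients match — and locating precisely where the hypothesis bites, namely that integrality of $[\tfrac{\psi}{2\pi}]$ (rather than mere closedness of $\psi$) is what lets $X$ be chosen with the prescribed Chern class, hence $\theta$ with $d\theta=\pi^{*}\psi$.
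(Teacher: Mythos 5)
Your proof is correct and follows the same overall route as the paper: build the circle bundle with a connection $\theta$ whose curvature is exactly $\pi^{*}\psi$, normalise it locally to $dx+\pi^{*}\phi$ over a chart where $\eta=dz$, and read off (2)--(4). The paper realises this via the associated Hermitian line bundle $L=c_{1}^{-1}(-[\tfrac{\psi}{2\pi}])$, a Chern connection corrected by an exact form, and unit-length local sections $s_{U}$ with $\nabla_{L}s_{U}=i\phi_{U}\otimes s_{U}$, gluing $dx_{U}+\phi_{U}$ to $\theta$; you work directly on the principal bundle and correct $\theta_{0}$ by $\pi^{*}\gamma$, which is the same content with less apparatus (modulo the sign convention relating curvature to $c_{1}$, which the paper handles by taking $-[\tfrac{\psi}{2\pi}]$). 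Two of your micro-arguments are genuinely different and, if anything, tighter: for the signature you exhibit a pointwise adapted basis $(\xi,\tilde e_{1},\dots,\tilde e_{n+1})$ giving a block Gram matrix with a $2\times2$ block of determinant $-c^{2}$, whereas the paper only computes $\det(\tilde g_{ij})<0$ in coordinates (which pins down Lorentzian signature only after also invoking the positive definite $n$-dimensional block); and for (4) you use $2\tilde g_{f}(\nabla_{Y}\xi,W)=(d\xi^{\flat}+\mathcal{L}_{\xi}\tilde g_{f})(Y,W)$ with $\xi^{\flat}=\pi^{*}\eta$ closed to get $\nabla_{Y}\xi=\tfrac12(\xi f)\,\pi^{*}\eta(Y)\,\xi$ invariantly, where the paper computes $\Gamma^{k}_{0i}$; your version has the bonus of exhibiting the recurrence $1$-form explicitly. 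One caveat on the bookkeeping you flagged in (2): collecting terms with $2\phi=u_{i}dy^{i}+u_{n+1}dz$ actually gives the $dz^{2}$-coefficient $f+u_{n+1}+g_{(n+1)(n+1)}$ (as in the matrix displayed in the paper's proof), not $f+\tfrac{u_{n+1}}{2}+g_{(n+1)(n+1)}$ as in the proposition's display; this is a typo in the statement, not a gap in your argument, but it confirms that the factor-of-$2$ tracking you worried about is exactly where the discrepancy sits.
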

\begin{proof}
	Consider the smooth complex line bundle $L \rightarrow M$ given by $c_{1}^{-1}(-[\frac{\psi}{2\pi}])$ and some Hermitian
	metric $h$ on $L$. The curvature endomorphism of the Chern connection $\nabla_{h}^{C}$ of $(L,h)$ is given by the closed
	imaginary $(1,1)$-form $iF_{h}$. Moreover, $[iF_{h}]=-2\pi i c_{1}(L)$. Hence, $F_{h}-\psi = df$ is exact and
	$\nabla_{L} := \nabla_{h}^{C} - if$ is another Hermitian connection on $L$ with respect to $h$ and its curvature
	endomorphism is given by $i\psi$.\par
	The metric $h$ provides a $U(1)$-reduction of the $GL(1,\C)$-bundle $(L,h)$. Since $\nabla_{L}$ is Hermitian it reduces as
	well. In this way, we derive an $S^{1}$-bundle $X:= \{ v \in L: h(v,v)=1 \} \rightarrow M$ together with the $U(1)$-connection
	$\nabla_{L}$.\par
	Consider the $1$-form $\eta$ on $M$. By Frobenius' theorem we can find for all $x \in M$ local coordinates
	$(y_{1},\ldots,y_{n},z)$ on some neighborhood $U \ni x$ such that $\eta = dz$. Moreover, we may assume that
	$X \rightarrow M$ is trivial over $U$ and $\psi = d\phi_{U}$. Consider a unit length section $s_{U}: U \rightarrow L$ such
	that\footnote{We can find such a section without any restriction: If $t:U \rightarrow L$ is any unit length section
	we have $\nabla_{L}t=i\alpha \otimes t$ for some 1-form $\alpha$. Hence, $\alpha -\phi_{U} = df$ and $s_{U}:=e^{-if}t$ has
	the connection form $i\phi_{U}$.}
	\begin{equation*}
		\nabla_{L}s_{U} = i\phi_{U} \otimes s_{U}.
	\end{equation*}
	Using the section $s_{U}$ we may define local coordinates $(x^{0},\ldots,x^{n+1}):=(x,y^{1},\ldots,y^{n},z)$ given by
	$e^{ix}s_{U}(y_{1},\ldots,y_{n},z)$.\par
	We have to construct the 1-form $\theta$ from the statement. For this we consider another coordinate neighborhood
	$V \subset M$. Assume we have a 1-form $\phi_{V}$ on $V$ such that $d\phi_{V} = \psi$ and a local unit length
	section $s_{V}:V \rightarrow L$ such that $\nabla_{L}s_{V}=i\phi_{V} \otimes s_{V}$. If $U \cap V \neq \emptyset$ we
	have $s_{V} = e^{ig_{UV}}s_{U}$. Therefore
	\begin{equation*}
		\nabla^{L}s_{V} = \nabla^{L}(e^{ig_{UV}}s_{U}) = idg_{UV} \otimes s_{V} + i\phi_{U} \otimes s_{V}
				=i(dg_{UV} + \phi_{U})s_{V}
	\end{equation*}
	and we conclude $dg_{UV}=\phi_{V} - \phi_{U}$. Given the local coordinates defined by $s_{U}$ and $s_{V}$ we observe
	\begin{equation*}
		e^{ix_{U}}s_{U} = e^{i(x_{V} +c)}s_{V} = e^{i(x_{V}+g_{UV}+c)}s_{U},
	\end{equation*}
	i.e., $dx_{U} - dx_{V} = dg_{UV} = \phi_{V}-\phi_{U}$. From this equation we conclude that $dx_{U}+\phi_{U}$ glues to a
	global non-vanishing 1-form $\theta$ on $X$. Moreover, $d\theta =\pi^{*}\psi$, i.e., the pullback of $\psi$ is an exact
	form on $X$.\par
	We have to show that $\tilde{g}_{f}:=2\theta \pi^{*}\eta + f \cdot \pi^{*}\eta^{2} + \pi^{*}g$ is a Lorentzian metric. This can be
	checked in the given local coordinate expression
	\begin{equation*}
		\tilde{g}_{f} = \begin{pmatrix}
				0 & 0 & \cdots & 0 & 1\\
				0  & g_{11} & \cdots & g_{1n} & g_{1(n+1)}+u_{1}\\
				\vdots & \vdots & \ddots & \vdots & \vdots\\
				0 & g_{n1} & \cdots & g_{nn} & g_{n(n+1)}+u_{n}\\
				1 & g_{(n+1)1}+u_{1} & \ldots & g_{(n+1)n}+u_{n} & f+u_{n+1}+g_{(n+1)(n+1)}
		            \end{pmatrix}
	\end{equation*}
	and we conclude $\det(\tilde{g}_{ij})<0$ since $(g_{ij})_{1 \leq i,j \leq n}$ is the Riemannian metric $g$ restricted to the
	submanifold $\{(y_{1},\ldots,y_{n},\text{const.})\}$.\par
	If $f \in C^{\infty}(X)$ is constant on the fibers the $U(1)$-action of the bundle leaves $\tilde{g}_{f}$ invariant since
	$\theta$ is the connection 1-form of $\nabla_{L}$ and all other terms in $\tilde{g}_{f}$ are pullbacks.\par
	By definition of $\tilde{g}_{f}$ the vertical vector field is lightlike.
	Using the local coordinate expression for $\tilde{g}_{f}$ we compute
	\begin{equation*}
		\Gamma_{0i}^{k} =\frac{1}{2}\delta_{i(n+1)}\delta_{k0}\frac{\partial f}{\partial x^{0}} \quad i,k \in \{0,\ldots,n+1\}.
	\end{equation*}
	Therefore, the vertical vector field is parallel if and only if $f$ is constant on the fibers.
\end{proof}
Up to diffeomorphism $X$ depends only on the choice of the class $[\frac{\psi}{2\pi}] \in H^{2}(M,\Z)$. However, the Lorentzian metric $g_{f}$ depends on the particular representative $\psi \in [\psi] \in H^{2}(M,\R)$. This will be important in the following sections.
\begin{proposition}\label{exclty3}
	Assume $(X,\tilde{g}_{f}) \rightarrow (M,g)$ is constructed as in Proposition $\ref{mainconstr}$ such that $(X,\tilde{g}_{f})$ is indecomposable.
	Then $(X,\tilde{g}_{f})$ is of type 1 if and only if $\frac{\partial f}{\partial x}|_{p} \neq 0$ for some $p \in X$.
	In particular, $(X,\tilde{g}_{f})$ is not of type 3.
\end{proposition}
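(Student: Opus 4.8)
Here is how I would go about it. The key is to pin down one curvature operator and read the type of the holonomy algebra off its ``block structure'' with respect to the parallel line $\Xi$.

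First I would record the recurrence $1$-form of the recurrent lightlike vertical field $V$. The Christoffel symbols computed in the proof of Proposition \ref{mainconstr} give $\nabla_{\cdot}V=\frac{1}{2}\frac{\partial f}{\partial x}\,dz\otimes V$, so $\nabla_{\cdot}V=\alpha\otimes V$ with $\alpha=h\cdot\pi^{*}\eta$, where $h:=\frac{1}{2}\frac{\partial f}{\partial x}$ and $dz=\pi^{*}\eta$. Since $\iota_{V}\tilde{g}_{f}$ is a nonzero multiple of $\pi^{*}\eta$ (visible from the matrix in part (2) of Proposition \ref{mainconstr}), we have $\Xi^{\perp}=\ker(\pi^{*}\eta)$; and since $\pi^{*}\eta$ is closed, $d\alpha=dh\wedge\pi^{*}\eta$, which therefore restricts to zero on $\Xi^{\perp}$. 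This last fact is the one genuinely delicate point of the argument, and it is exactly where the closedness of $\eta$ is used (alternatively one can prove $R(V,N)(\Xi^{\perp})\subseteq\Xi$ directly from the Christoffel symbols, exploiting that the only $x$-dependence of $\tilde{g}_{f}$ sits in the $dz^{2}$-coefficient).

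The implication that type $1$ forces $\frac{\partial f}{\partial x}\not\equiv 0$ is immediate: if $\frac{\partial f}{\partial x}\equiv 0$ then $f$ is fibrewise constant, so $V$ is parallel by Proposition \ref{mainconstr}(4); then $Hol^{0}(X,\tilde{g}_{f})$ fixes the line $\Xi$, so the $\R$-block of its Lie algebra vanishes, and by Theorem \ref{BBILthm} the type is $2$ or $4$. For the converse, assume $\frac{\partial f}{\partial x}|_{p}\neq 0$. Along the circle fibre through $p$ the function $\frac{\partial f}{\partial x}$ is periodic and not identically zero, hence non-constant, so $\frac{\partial^{2}f}{\partial x^{2}}$ is nonzero at some point $p'$ on that fibre. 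Now I would analyse $R(V,N)|_{p'}$, where $N:=\partial/\partial z$ in Walker coordinates around $p'$ (any local field with $\tilde{g}_{f}(V,N)=1$, e.g. a section of $\Theta$, would serve); this operator lies in the holonomy algebra of $(X,\tilde{g}_{f})$ based at $p'$. Using $R(X,Y)V=d\alpha(X,Y)V$, the scalar by which $R(V,N)$ acts on $\Xi$ is $d\alpha(V,N)=V(h)\,\pi^{*}\eta(N)=\frac{1}{2}\frac{\partial^{2}f}{\partial x^{2}}|_{p'}\neq 0$ (the cross term dies since $\pi^{*}\eta(V)=0$), so its $\R$-block is nonzero. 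Using the pair symmetry of the curvature tensor, for $Y_{1},Y_{2}\in\Xi^{\perp}$ we get $\tilde{g}_{f}(R(V,N)Y_{1},Y_{2})=\tilde{g}_{f}(R(Y_{1},Y_{2})V,N)=d\alpha(Y_{1},Y_{2})\,\tilde{g}_{f}(V,N)=0$, so $R(V,N)$ maps $\Xi^{\perp}$ into $(\Xi^{\perp})^{\perp}=\Xi$ and hence induces the zero endomorphism on the screen $\mathcal{S}=\Xi^{\perp}/\Xi$; that is, its $\liealg{so}(n)$-block vanishes.

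Finally I match these two facts against the classification in Theorem \ref{BBILthm}. A nonzero $\R$-block excludes types $2$ and $4$; and in a type-$3$ algebra the $\R$-block is $\varphi$ of the $\liealg{so}(n)$-block, so a vanishing $\liealg{so}(n)$-block forces a vanishing $\R$-block --- contradicting the above. Hence $(X,\tilde{g}_{f})$ is of type $1$, which completes the equivalence. The last assertion then follows at once: when $\frac{\partial f}{\partial x}\equiv 0$ the type is $2$ or $4$, and otherwise it is $1$, so type $3$ never occurs.
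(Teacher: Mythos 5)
Your proof is correct, and its computational core coincides with the paper's: both arguments reduce everything to the curvature of the parallel line bundle $\Xi$ (your $d\alpha$ with $\alpha=\tfrac12\frac{\partial f}{\partial x}\,\pi^{*}\eta$ is exactly the paper's $R^{\nabla^{\Xi}}$, since $R(X,Y)V=d\alpha(X,Y)V$), and both use the same periodicity trick on the circle fibre to upgrade $\frac{\partial f}{\partial x}|_{p}\neq 0$ to $\frac{\partial^{2}f}{\partial x^{2}}|_{p'}\neq 0$ at some point of the same fibre. Where you genuinely diverge is in how the curvature data is converted into type information. The paper cites two results of Bezvitnaya--Galaev (type 2 or 4 iff $R^{\nabla^{\Xi}}=0$, and that type 3 forces $R^{\nabla^{\Xi}}(\partial_{0},\partial_{n+1})=0$), whereas you make the argument self-contained: you exhibit the single holonomy element $R(V,N)_{p'}$, compute that its $\R$-block is $d\alpha(V,N)=\tfrac12\frac{\partial^{2}f}{\partial x^{2}}|_{p'}\neq 0$ while its $\liealg{so}(n)$-block vanishes because $d\alpha=dh\wedge\pi^{*}\eta$ kills $\Xi^{\perp}\times\Xi^{\perp}$ (this is where closedness of $\eta$ enters), and then read off from Theorem \ref{BBILthm} that such an element is incompatible with types 2 and 4 ($a=0$ there) and with type 3 ($a=\varphi(A)=\varphi(0)=0$). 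This buys a citation-free and arguably more transparent exclusion of type 3; the paper's route buys brevity. One cosmetic slip in the easy direction: when $V$ is parallel the holonomy fixes the \emph{vector} $V$, not merely the line $\Xi$ (fixing only the line would still allow type 1); your subsequent conclusion that the $\R$-block vanishes is of course the consequence of the former, so the argument stands as intended.
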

\begin{proof}
	Consider the vertical line subbundle $\Xi$ of $TX$ spanned by $\frac{\partial}{\partial x}$ and its induced connection
	$\nabla^{\Xi}$. Write $R^{\nabla^{\Xi}}$ for the curvature of $\nabla^{\Xi}$. Given the local coordinates
	$(x^{0},x^{1},\ldots,x^{n},x^{n+1}):=(x,y^{1},\ldots,y^{n},z)$ from Proposition $\ref{mainconstr}$ we know
	\begin{equation*}
		\tilde{g}_{f} = 2dxdz + \tilde{u}_{i}dy^{i}dz + \tilde{f} \cdot dz^{2} + g_{ij}dy^{i}dy^{j}.
	\end{equation*}
	Using $\Gamma_{0i}^{k}=\frac{1}{2}\delta_{i(n+1)}\delta_{k0}\frac{\partial \tilde{f}}{\partial x^{0}}$ a short computation shows
	\begin{equation*}
		R^{\nabla^{\Xi}}(\frac{\partial}{\partial x^{i}},\frac{\partial}{\partial x^{j}})\frac{\partial}{\partial x^{0}}
			= \frac{1}{2}(\delta_{j(n+1)}\frac{\partial^{2}\tilde{f}}{\partial x^{i} \partial x^{0}}
			-\delta_{i(n+1)}\frac{\partial^{2}\tilde{f}}{\partial x^{j} \partial x^{0}})\frac{\partial}{\partial x^{0}}.
	\end{equation*}
	It is shown in \cite{bezvitnaya-2005} that $\liealg{hol}(X,\tilde{g}_{f})$ is of type 2 or 4 if and only if $R^{\nabla^{\Xi}} = 0$.
	Using the formula for $R^{\nabla^{\Xi}}$ we conclude that $(X,\tilde{g}_{f})$ is not of type 1 or type 3 if
	$\frac{\partial \tilde{f}}{\partial x}=0$. Assume
	$0 = R^{\nabla^{\Xi}}(\frac{\partial}{\partial x^{0}},\frac{\partial}{\partial x^{n+1}})\frac{\partial}{\partial x}
				= \frac{1}{2}\frac{\partial^{2}\tilde{f}}{\partial(x^{0})^{2}}$.
	If $\tilde{f}$ is restricted to a fiber $S^{1}$ then we conclude that $\tilde{f}$ is constant on the fiber, i.e.,
	$\frac{\partial\tilde{f}}{\partial x^{0}}=0$. In particular, $R^{\nabla^{\Xi}}=0$ if and only if
	$\frac{\partial\tilde{f}}{\partial x^{0}}=0$. Hence, $(X,\tilde{g}_{f})$ is of type 1 or 3 if and only if
	$\frac{\partial f}{\partial x}|_{p} \neq 0$ for some $p \in X$. For the last statement we assume $(X,\tilde{g}_{f})$ is of type 3.
	In our case \cite{bezvitnaya-2005}[Prop. 6.2.1] implies
	$R^{\nabla^{\Xi}}(\frac{\partial}{\partial x^{0}},\frac{\partial}{\partial x^{n+1}})=0$ and therefore $R^{\nabla^{\Xi}}=0$.
	This is a contradiction.
\end{proof}
\begin{definition}
	If $(X,\tilde{g}_{f})$ is constructed as in Prop. \ref{mainconstr} we say $f \in C^{\infty}(X)$ is sufficiently generic if
	$\liealg{hol}(X,\tilde{g}_{f})$ is indecomposable and not of type 4.
\end{definition}
It is not difficult to construct sufficiently generic functions for Walker coordinates. Using a partition of unity we derive sufficiently generic functions on $X$.
\begin{example}
	Let $M:=\R^{3} \setminus \{(0,0,-1),(0,0,+1)\}$ and let $0 \neq [\frac{\psi}{2\pi}] \in H^2(M,\Z)$. Define $\eta:=\frac{\partial}{\partial z}$
	on $M$ and construct $(X,\tilde{g}_{f})$ as in Prop. \ref{mainconstr} with $f \in C^{\infty}(X)$ sufficiently generic. Then $X$ is totally twisted
	with $b_{1}(X)=0$ and $\liealg{hol}(X,\tilde{g}_{f})$ is of type 2 if $\frac{\partial f}{\partial x} \equiv 0$ and otherwise of type 1.
\end{example}
\begin{proof}
	From the long exact sequence of homotopy groups for the fibration $X \rightarrow M$ we conclude that $\pi_{1}(X)$ is a finite torsion group.
	Moreover, Gysin's sequence implies $H^{3}(X,\R)=\R^{2}$. Using the K\"{u}nneth formula we derive a contradiction unless $X$ is totally twisted.
\end{proof}
If $M = N \times L$ with $\dim L = 1$ and $N$ not necessarily compact we may define $\eta = \frac{\partial}{\partial x^{L}}$ and consider
$[\frac{\psi}{2\pi}] \in H^{2}(N,\Z)$. In this case $X = \tilde{X} \times L$ where $\tilde{X}$ is the total space of the bundle corresponding to
$[\frac{\psi}{2\pi}]$. Let $g$ be a Riemannian metric on $N$. Then we consider the Lorentzian metric
\begin{equation*}
	\tilde{g}_{f} := 2\theta dz + f \cdot dz^{2} + \pi^{*}g
\end{equation*}
for some function $f \in C^{\infty}(X)$, where $z$ is the coordinate on $L$. In this situation we say $(X,\tilde{g}_{f})$ is of
{\em toric type}\footnote{If $M=N \times S^{1}$ then $X$ is a torus bundle over $N$ where one direction in the fibers is trivial.}.
\begin{proposition}\label{screenhoriz}
	Let $(X,\tilde{g}_{f})$ be of toric type and $\pi:\tilde{X} \rightarrow M$ the corresponding $S^{1}$-bundle. Then:
	\begin{itemize}
		\item
		The horizontal distribution in $TX$ is isomorphic to the screen bundle,
		\item
		$Hol(M,g) \subset G$,
		\item
		$\liealg{hol}^{loc}_{\pi(p)}(M,g) \subset \liealg{hol}^{loc}_{(p,q)}(\mathcal{S},\nabla^{\mathcal{S}})$.
	\end{itemize}
\end{proposition}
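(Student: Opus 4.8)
The plan is to treat the three items in turn; the second is the substantive one, and is where the work lies.

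\medskip
\noindent\emph{Horizontal distribution $\cong$ screen.} From $\tilde{g}_{f}=2\theta\,dz+f\,dz^{2}+\pi^{*}g$ one reads off $\tilde{g}_{f}(\partial_{x},\cdot)=2\,dz$, since the vertical field $\partial_{x}$ is annihilated by $\pi^{*}g$ and by $dz$ while $\theta(\partial_{x})=1$. Hence $\Xi=\langle\partial_{x}\rangle$ and $\Xi^{\perp}=\ker dz$, i.e.\ $\Xi^{\perp}$ is exactly the $\tilde{X}$-factor $T\tilde{X}\subset TX$. The horizontal distribution $\mathcal{H}$ of $\tilde{X}\to N$ lies inside $\Xi^{\perp}$, has rank $n$, and is transverse to $\Xi$ (again since $\theta(\partial_{x})=1$), so $\Xi^{\perp}=\Xi\oplus\mathcal{H}$ and the composite $\mathcal{H}\hookrightarrow\Xi^{\perp}\twoheadrightarrow\mathcal{S}$ is a bundle isomorphism. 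We take $S:=\mathcal{H}$ as the realization of the screen; by the discussion preceding the proposition, $Hol(S,\nabla^{S})=Hol(\mathcal{S},\nabla^{\mathcal{S}})=G$.

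\medskip
\noindent\emph{The inclusion $Hol(M,g)\subset G$.} Using $d\pi$ we identify $S=\mathcal{H}$ with $\pi^{*}TN$ and compare $\nabla^{S}$ with the pullback of the Levi-Civita connection $\nabla^{g}$ of $(N,g)$. In the Walker coordinates of Proposition~\ref{mainconstr}(2) the functions $u_{k}$ and $g_{ij}$ depend on $y$ only, because the class is pulled back from $N$; setting $E_{k}:=\partial_{y^{k}}-\frac{1}{2}u_{k}\partial_{x}$ (the horizontal lift of $\partial_{y^{k}}$), a direct computation of the Christoffel symbols yields
\begin{equation*}
	\nabla^{S}_{E_{k}}E_{j}=\Gamma^{l}_{kj}(g)\,E_{l},\qquad \nabla^{S}_{\partial_{x}}E_{j}=0,
\end{equation*}
where $\Gamma^{l}_{kj}(g)$ are the Christoffel symbols of $(N,g)$. (Note $\nabla^{S}$ is \emph{not} the full pullback connection: $\nabla^{S}_{\partial_{z}}E_{j}$ picks up an extra term built from the bundle curvature $\psi$; this direction plays no role here.) Given a loop $\gamma$ in $N$ based at $\pi(p)$, let $c$ be its $\theta$-horizontal lift in $\tilde{X}$ starting at $p$ (so $c$ ends at $p\cdot a$ for some $a\in S^{1}$), and let $\sigma$ run in the fibre over $\pi(p)$ from $p\cdot a$ back to $p$; then $c*\sigma$ is a loop in $X$ at $(p,q)$. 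Since the tangent of $c$ is $\dot{\gamma}^{k}E_{k}$, the first identity makes the $\nabla^{S}$-parallel transport along $c$ in the frame $\{E_{j}\}$ coincide with the $\nabla^{g}$-parallel transport along $\gamma$ in the frame $\{\partial_{y^{j}}\}$, while the second identity makes $\{E_{j}\}$ parallel along the fibre, so transport along $\sigma$ is trivial. Hence the $\nabla^{S}$-holonomy of $c*\sigma$ equals the $\nabla^{g}$-holonomy of $\gamma$ under $d\pi$, and letting $\gamma$ vary gives $Hol(N,g)\subset Hol(S,\nabla^{S})=G$; as the one-dimensional factor $L$ is flat and contributes trivially, $Hol(M,g)=Hol(N,g)$. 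Loops not lying in a single chart are handled by concatenating chart-sized pieces, the $\{E_{j}\}$ transforming as a frame for $\pi^{*}TN$.

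\medskip
\noindent\emph{The local inclusion.} Pick Walker coordinates around $(p,q)$ whose domain $V$ is so small that the local holonomy algebras occurring below have stabilized. The induced metric on the Riemannian submanifold $M_{xz}$ is $g_{ij}(y)\,dy^{i}dy^{j}$, so $M_{xz}$ is isometric to a small ball $U\ni\pi(p)$ in $(N,g)$, with $\nabla^{M_{xz}}$ corresponding to $\nabla^{g}|_{U}$ and agreeing with $\nabla^{S}|_{M_{xz}}$ under $\partial_{y^{j}}\leftrightarrow E_{j}$. The parallel-transport argument recalled before the proposition, carried out inside $V$, gives $\liealg{hol}_{(p,q)}(M_{xz},\nabla^{M_{xz}})\subset\liealg{hol}_{(p,q)}(V,\nabla^{S})$. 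By the choice of $V$ the left-hand side equals $\liealg{hol}^{loc}_{\pi(p)}(M,g)$ and the right-hand side equals $\liealg{hol}^{loc}_{(p,q)}(S,\nabla^{S})=\liealg{hol}^{loc}_{(p,q)}(\mathcal{S},\nabla^{\mathcal{S}})$, which is the assertion. (Equivalently, run the construction above with $\gamma$ confined to an arbitrarily small ball, so that $c*\sigma$ stays inside a prescribed neighbourhood of $(p,q)$.)

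\medskip
I expect the main obstacle to be precisely the content of the second item: identifying the part of $\nabla^{S}$ that sees only the horizontal and fibre directions of $\tilde{X}\to N$ with $\nabla^{g}$, in spite of the extra $\psi$-curvature carried by $\nabla^{S}$ in the $L$-direction, together with the topological point that horizontal lifts of loops in $N$ need not close up in $X$ — which is what forces the corrective fibre arc $\sigma$ whose parallel-transport contribution must be checked to be trivial. The passage from the global to the local statement, matching the various local holonomy algebras on small charts, is then routine bookkeeping.
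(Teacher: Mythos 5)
Your proposal is correct and follows essentially the same route as the paper: identify the screen with the horizontal distribution of $\tilde{X}\to M$, realize each element of $Hol(M,g)$ by the horizontal lift of a loop closed up by a fibre arc, and check via the local Christoffel computation that $\nabla^{S}$ restricted to horizontal directions reproduces the base Levi-Civita connection while transport along the fibre is trivial (the paper phrases this through the parallel-transport ODE rather than the frame identities $\nabla^{S}_{E_k}E_j=\Gamma^l_{kj}(g)E_l$, $\nabla^{S}_{\partial_x}E_j=0$, but it is the same computation). The only discrepancy is the harmless factor of $\tfrac12$ in your horizontal lift versus the paper's $Y_i=\partial_{y^i}-u_i\partial_x$, which stems from the paper's own convention for $u_i$.
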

\begin{proof}\par \noindent
	\begin{enumerate}
		\item \noindent
		We have $X = \tilde{X} \times L$ with $\dim L = 1$. If $\frac{\partial}{\partial z}$ is the global coordinate field on $L$
		and $V$ the vertical vector field of $\tilde{X}$ the screen bundle may be identified with
		$S= \mbox{span}\{V,Z:=-\frac{1}{2}fV+\frac{\partial}{\partial z}\}^{\perp}$. Since
		$\tilde{g}_{f} = 2dxdz + u_{i}dy^{i}dz + g_{ij}dy^{i}dy^{j} + fdz^{2}$ we observe
		\begin{equation*}
			\tilde{g}_{f}(V,\frac{\partial}{\partial y^{i}}-u_{i}\frac{\partial}{\partial x})
				= \tilde{g}_{f}(Z,\frac{\partial}{\partial y^{i}}-u_{i}\frac{\partial}{\partial x}) = 0,
		\end{equation*}
		i.e., $Y_{i}:= \frac{\partial}{\partial y^{i}} -u_{i}\frac{\partial}{\partial x} \in S$. However, a simple computation shows that the
		horizontal space of $\tilde{X}$ is spanned by $\{Y_{i}\}$.
		\item \noindent
		Fix $(p,q) \in X$ and let $x:=\pi(p)$. To each $a \in Hol_{x}(M,g)$ we construct a loop $\tilde{\gamma}:I \rightarrow X$ on which
		parallel displacement induces $a \in G$. More precisely, let $\gamma: [0,1] \rightarrow M$ be a loop with $\gamma(0)=x$ and let
		$\tilde{\delta}:[0,1] \rightarrow \tilde{X}$ with $\tilde{\delta}(0)=p$ be its horizontal lift with $u:= \tilde{\delta}(1)$.
		If $u \neq p$ let $\tilde{\beta}$ be the integral curve of the vertical field in the fiber $\pi^{-1}(x)$ connecting $u$ and $p$.
		We define
		\begin{equation*}
			\tilde{\gamma} := \begin{cases}
						(\tilde{\delta}*\tilde{\beta},q) & \text{if}~u \neq p,\\
						(\tilde{\delta},q) & \text{otherwise}.
			                  \end{cases}
		\end{equation*}
		Let $v \in T_{x}M$ and let $v_{t}$ be its parallel displacement along $\gamma$. Write $\tilde{v}_{t}$ for the horizontal lift of
		$v_{t}$. First, we consider the parallel displacement $w_{t}$ of $\tilde{v}_{0}$ along $\delta=(\tilde{\delta},q): I \rightarrow X$.
		We show $\tilde{v}_{t}=pr_{S}(w_{t})$. Clearly, the set $J \subset I$ on which this equation holds is non-empty and closed.
		In order to show that $J \subset I$ is open we may use local coordinates. For $1 \leq \alpha,\beta,k \leq n$ we have
		$w^{(n+1)}_{t}=\dot{\delta}^{(n+1)}_{t}=\Gamma^{k}_{\cdot 0} =0$ and
		\begin{equation*}
			0 = \dot{w}^{k}_{t} + \Gamma^{k}_{ij}\dot{\delta}^{i}_{t}w^{j}_{t}
			= \dot{w}^{k}_{t} + \Gamma^{k}_{\alpha\beta}\dot{\gamma}^{\alpha}_{t}w^{\beta}_{t}
			= \dot{w}^{k}_{t} + \tilde{\Gamma}^{k}_{\alpha\beta}\dot{\gamma}^{\alpha}_{t}w^{\beta}_{t},
		\end{equation*}
		where $\tilde{\Gamma}^{k}_{\alpha\beta}$ are the Christoffel symbols of $(M,g)$. This shows $w^{k}_{t}=v^{k}_{t}$, i.e.,
		$pr_{S}(w_{t})=\tilde{v}_{t}$.\par
		Assume $u \neq p$ and consider the parallel displacement of a vector $\tilde{v} \in \Xi^{\perp}_{u}$ along $\beta=(\tilde{\beta},q)$.
		Again we can work in a local coordinate chart and conclude $\tilde{v}^{k}_{t}=\mbox{const.}$
		\item
		Using the same arguments as above the last statement follows.
	\end{enumerate}
\end{proof}
\section{Examples and Hermitian screen bundles}
\begin{example}
	Let $(M,g)$ be a Riemannian manifold such that $\liealg{hol}(M,g)=\liealg{so}(n)$. If $(X,\tilde{g}_{f})$ is of toric type over $(M,g)$ and if
	$f \in C^{\infty}(X)$ is sufficiently generic then
	\begin{equation*}
		\liealg{hol}(X,\tilde{g}_{f}) = \begin{cases}
						\liealg{so}(n) \ltimes \R^{n} & \text{if}~\frac{\partial f}{\partial x} \equiv 0,\\
						(\R \oplus \liealg{so}(n)) \ltimes \R^{n} & \text{otherwise}.
		                            \end{cases}
	\end{equation*}
	\qed
\end{example}
In order to construct other examples we compute the curvature of $(S,\nabla^{S})$ in case that $(X,\tilde{g}_{f})$ is of toric type.
\begin{equation*}
	\nabla_{\partial_{i}}{Y_{j}} = (\Gamma^{k}_{ij}-u_{j}\Gamma^{k}_{0i})\frac{\partial}{\partial x^{k}}
						-\frac{\partial u_{j}}{\partial x^{i}}\frac{\partial}{\partial x}
\end{equation*}
implies $\nabla_{\partial_{0}}{Y_{j}} =-\frac{\partial u_{j}}{\partial x}\frac{\partial}{\partial x}=0$ and for $\alpha \in \{1,\ldots,n\}$
we have $\nabla^{S}_{Y_{i}}{Y_{j}}=pr_{S}(\nabla_{\frac{\partial}{\partial y^{i}}}{Y_{j}}) = \Gamma^{\alpha}_{ij}Y_{\alpha}$. Therefore,
$R^{\nabla^{S}}(Y_{i},Y_{j})Y_{k} = pr_{S}(R(\partial_{i},\partial_{j})\partial_{k}) = pr_{S}(R_{(M,g)}(\partial_{i},\partial_{j})\partial_{k})$.
Moreover, $R^{\nabla^{S}}(\partial_{0},Y_{i})Y_{j}=R^{\nabla^{S}}(\partial_{0},Z)Y_{k}=0$.\par
Any almost complex structure $J$ on $M$ induces an almost complex structure $\tilde{J}$ on $S$ since $J$ can be lifted to the horizontal
bundle. The same way we can lift other tensors to the screen bundle and conclude
\begin{lemma}\label{toric-holonomies}
	Let $(M,g)$ be a Riemannian manifold and $[\frac{\psi}{2\pi}] \in H^{2}(M,\Z)$. Let $(X = \tilde{X} \times L,g_{f})$ be of toric type
	where $\tilde{X} \rightarrow M$ is the $S^{1}$-bundle corresponding to $[\frac{\psi}{2\pi}]$ and $f \in C^{\infty}(X)$ is sufficiently generic.
	\begin{enumerate}
		\item \noindent
		$Hol^{0}(S,\nabla^{S})=0 \Leftrightarrow (M,g)$ is flat and $\nabla^{(M,g)}\psi =0$.
		\item \noindent
		If $(M,J,g)$ is K\"{a}hler then $\nabla^{S}\tilde{J}=0 \Leftrightarrow \psi \in \Lambda^{1,1}(M,J)$.
		\item \noindent
		If $(M,J,g)$ is K\"{a}hler with a parallel holomorphic volume form $\Omega$ then
		$\nabla^{S}\tilde{\Omega}=0 \Leftrightarrow \psi \in \Lambda^{1,1}(M,J)$ is a primitive form.\footnote{Remember, a 2-form $\psi$
						on $(M,J,g)$ is primitive if $\Lambda \psi =\sum_{i=1}^{\dim_{\C}(M,J)}{\psi(e_{i},Je_{i})} =0$ where
						$\Lambda$ is the dual Lefschetz operator.}
		\item \noindent
		If $(M,J_{1},J_{2},J_{3},g)$ is hyperk\"{a}hler then $\nabla^{S}\tilde{J}_{1}=\nabla^{S}\tilde{J}_{2}=\nabla^{S}\tilde{J}_{3}=0
		\Leftrightarrow \psi \in \Lambda^{1,1}(M,J_{1}) \cap \Lambda^{1,1}(M,J_{2})$.
		\item \noindent
		If $(M,\phi,g)$ is a $G_{2}$-manifold then $\nabla^{S}\tilde{\phi}=0 \Leftrightarrow \mathcal{BI}(C_{24}(\psi \otimes \phi)) =0$,
		where $C_{24}$ is the metric contraction over the second and the fourth slot and $\mathcal{BI}$ is the Bianchi projector.
		\item \noindent
		If $(M,\Omega,g)$ is a $Spin(7)$-manifold  then $\nabla^{S}\tilde{\Omega}=0 \Leftrightarrow \mathcal{AB}(C_{24}(\psi \otimes \Omega)) =0$,
		where $\mathcal{AB}$ is the alternating cyclic sum.
	\end{enumerate}
\end{lemma}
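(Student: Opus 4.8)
The plan is to reduce all six statements to a single algebraic criterion for when the lift to $S$ of a tensor field on $M$ is $\nabla^{S}$-parallel. Work in the coordinates $(x,y^{1},\ldots,y^{n},z)$ of Proposition \ref{mainconstr}, so that $\tilde{g}_{f}=2dxdz+u_{i}dy^{i}dz+g_{ij}dy^{i}dy^{j}+fdz^{2}$ with $2\phi=u_{i}dy^{i}$ a local primitive of $\psi$, and recall from the proof of Proposition \ref{screenhoriz} that $S$ is spanned by the horizontal lifts $Y_{i}=\partial_{i}-u_{i}\partial_{0}$, that $\nabla^{S}_{\partial_{0}}Y_{j}=0$, and that $\nabla^{S}_{Y_{i}}Y_{j}=\tilde{\Gamma}^{k}_{ij}Y_{k}$ where $\tilde{\Gamma}^{k}_{ij}$ are the Christoffel symbols of $(M,g)$. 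Since $\{\partial_{0},Y_{1},\ldots,Y_{n},\partial_{n+1}\}$ is a local frame of $TX$, the only datum still missing is $\nabla^{S}_{\partial_{n+1}}Y_{j}$. Using $\Gamma^{k}_{0i}=\frac{1}{2}\delta_{i(n+1)}\delta_{k0}\frac{\partial f}{\partial x}$, the fact that $u_{i}$ is a pullback from $M$, and the identity $\partial_{i}u_{j}-\partial_{j}u_{i}=2\psi_{ij}$ (writing $\psi=\frac{1}{2}\psi_{ij}dy^{i}\wedge dy^{j}$), a short computation of the remaining Christoffel symbols gives
\begin{equation*}
	\nabla^{S}_{\partial_{n+1}}Y_{j}=\frac{1}{2}g^{kl}\psi_{jl}\,Y_{k}=\frac{1}{2}\widetilde{A_{\psi}}\,Y_{j},
\end{equation*}
where $A_{\psi}$ is the skew-symmetric field of endomorphisms of $TM$ determined by $g(A_{\psi}X,Y)=\psi(X,Y)$ and $\widetilde{A_{\psi}}$ is its lift to $S$.

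Next I would feed this into a Leibniz computation. For any tensor field $T$ on $M$ with lift $\tilde{T}$ to $S$, the functions $\tilde{T}(Y_{i_{1}},\ldots,Y_{i_{k}})$ are pullbacks via $\pi:X\to M$ and are therefore annihilated by $\partial_{0}$ and by $\partial_{n+1}$, hence by $Z=-\frac{1}{2}fV+\partial_{n+1}$, while $Y_{i}$ projects to $\partial_{i}$; combining this with the three formulas above yields $\nabla^{S}_{\partial_{0}}\tilde{T}=0$, $\nabla^{S}_{Y_{i}}\tilde{T}=\widetilde{\nabla^{(M,g)}_{\partial_{i}}T}$, and $\nabla^{S}_{Z}\tilde{T}=\frac{1}{2}\,\widetilde{A_{\psi}}\cdot\tilde{T}$, where the dot denotes the natural derivation action of the endomorphism $\widetilde{A_{\psi}}$ on $\tilde{T}$. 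As these cover a local frame of $TX$ and all conditions involved are global, I obtain
\begin{equation*}
	\nabla^{S}\tilde{T}=0\quad\Longleftrightarrow\quad\nabla^{(M,g)}T=0\ \text{ and }\ A_{\psi}\cdot T=0.
\end{equation*}
The same bookkeeping applied to $\nabla^{S}$ itself, together with the observation that $[Y_{i},Z]$ is a multiple of $\partial_{0}$ (so $\nabla^{S}_{[Y_{i},Z]}=0$), recovers the text's formulas $R^{\nabla^{S}}(Y_{i},Y_{j})=pr_{S}\circ R_{(M,g)}(\partial_{i},\partial_{j})$ and $R^{\nabla^{S}}(\partial_{0},\cdot)=0$, and adds $R^{\nabla^{S}}(Y_{i},Z)=\frac{1}{2}\widetilde{\nabla^{(M,g)}_{\partial_{i}}A_{\psi}}$. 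Hence $R^{\nabla^{S}}=0$ if and only if $(M,g)$ is flat and $\nabla^{(M,g)}A_{\psi}=0$, equivalently $\nabla^{(M,g)}\psi=0$; since $X$ is connected, $Hol^{0}(S,\nabla^{S})$ is trivial precisely when $R^{\nabla^{S}}$ vanishes, which proves (1).

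For (2)--(6) the ambient geometry already forces $\nabla^{(M,g)}T=0$ for the relevant tensor $T$ (Kähler for $J$; Kähler with parallel holomorphic volume form for $\Omega$; hyperkähler for $(J_{1},J_{2},J_{3})$; torsion-free $G_{2}$ for $\phi$; torsion-free $Spin(7)$ for $\Omega$), so only $A_{\psi}\cdot T=0$ remains, and I would translate this condition structure by structure. For $T=J$ one has $A_{\psi}\cdot J=[A_{\psi},J]$, and $[A_{\psi},J]=0$ is equivalent to $\psi$ being $J$-invariant, i.e.\ to $\psi\in\Lambda^{1,1}(M,J)$; this is (2), and applying it to $J_{1},J_{2}$ (with $J_{3}=J_{1}J_{2}$, which forces $[A_{\psi},J_{3}]=0$ for free) gives (4). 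For the holomorphic volume form the stabilizer of a unit complex volume form inside $SO$ is $SU$, so $A_{\psi}\cdot\Omega=0$ is equivalent to $A_{\psi}\in\liealg{su}$, i.e.\ to $A_{\psi}$ commuting with $J$ and having vanishing complex trace; the former is $\psi\in\Lambda^{1,1}(M,J)$ and the latter is $\Lambda\psi=\sum_{i}\psi(e_{i},Je_{i})=0$, i.e.\ $\psi$ primitive, which is (3). Finally $\liealg{g}_{2}=\{A\in\liealg{so}(7):A\cdot\phi=0\}$ and $\liealg{spin}(7)=\{A\in\liealg{so}(8):A\cdot\Omega=0\}$; expanding $A_{\psi}\cdot\phi$ as the sum of three metric contractions of $\psi$ against $\phi$ and using that $\phi$ is totally antisymmetric identifies $A_{\psi}\cdot\phi$, up to a nonzero universal constant, with the cyclic sum $\mathcal{BI}(C_{24}(\psi\otimes\phi))$, and likewise $A_{\psi}\cdot\Omega$ with $\mathcal{AB}(C_{24}(\psi\otimes\Omega))$; hence $A_{\psi}\cdot\phi=0\Leftrightarrow\mathcal{BI}(C_{24}(\psi\otimes\phi))=0$ and $A_{\psi}\cdot\Omega=0\Leftrightarrow\mathcal{AB}(C_{24}(\psi\otimes\Omega))=0$, giving (5) and (6).

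The only genuine computation is the formula for $\nabla^{S}_{\partial_{n+1}}Y_{j}$; everything else is recorded in Proposition \ref{screenhoriz} or is standard structure theory for $U(n)$, $SU(n)$, $G_{2}$ and $Spin(7)$. The step I expect to require the most care is fixing the normalizations of the contraction $C_{24}$ and of the projectors $\mathcal{BI}$, $\mathcal{AB}$ so that the last two identifications are actually proved, and the universal constants shown to be nonzero, rather than merely asserted; once conventions are pinned down this is a brief index computation against the defining $3$- and $4$-forms.
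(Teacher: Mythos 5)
Your argument is correct and is essentially the paper's proof, repackaged: the paper likewise computes $\nabla^{S}_{Z}Y_{j}=g^{k\alpha}\psi(\partial_{j},\partial_{\alpha})Y_{k}$ (without your factor $\tfrac12$, but this is immaterial since every condition in the lemma is a vanishing condition) and then runs the same Leibniz computation case by case that you package once as $\nabla^{S}_{Z}\tilde{T}=\widetilde{A_{\psi}}\cdot\tilde{T}$, $\nabla^{S}_{Y_{i}}\tilde{T}=\widetilde{\nabla^{(M,g)}_{\partial_{i}}T}$. The one place your organization genuinely improves on the text is (3): identifying the annihilator of $\Omega$ in $\liealg{so}(2n)$ with $\liealg{su}(n)$ yields both the $(1,1)$-condition and primitivity at once, whereas the paper evaluates $\nabla_{Z}\tilde{\Omega}$ only on the single frame $(Z_{1},\ldots,Z_{m})$ and so only exhibits $\Lambda\psi=0$.
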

\begin{proof}\par \noindent
\begin{enumerate}
	\item \noindent
	By Proposition \ref{screenhoriz} $Hol^{0}(S,\nabla^{S})=0$ implies that $(M,g)$ is flat. Moreover, using local coordinates on $(M,g)$ such
	that $\Gamma^{\gamma}_{\alpha\beta}=0$ we have
	\begin{align*}
		R^{\nabla^{S}}(Y_{i},Z)Y_{k} &= \sum_{\alpha}\nabla^{S}_{Y_{i}}(\psi(\partial_{k},\partial_{\alpha})Y_{\alpha})
					= \sum_{\alpha}(Y_{i}(\psi(\partial_{k},\partial_{\alpha})))Y_{\alpha}\\
					&= \sum_{\alpha}((\nabla_{\partial_{i}}\psi)(\partial_{k},\partial_{\alpha}))Y_{\alpha},
	\end{align*}
	i.e. $R^{\nabla^{S}}=0 \Leftrightarrow R^{(M,g)}=0$ and $\nabla\psi =0$.
	\item \noindent
	If $(M,J,g)$ is K\"{a}hler let $(y^{1},\ldots,y^{2m})$ are local coordinates on $M$ such that $\partial_{2k}=J(\partial_{2k-1})$ and $Y_{j}$
	the horizontal lift of $\partial_{j}$. Since $\nabla^{(M,g)}J=0$ the only non-vanishing $\nabla^{S}_{\cdot}\tilde{J}$ can be 
	$\nabla^{S}_{Z}\tilde{J}$. However,
	\begin{equation*}
		\skal{\nabla^{S}_{Z}(\tilde{J}Y_{j})}{Y_{\ell}} = g^{k\alpha}\psi(J(\partial_{j}),\partial_{\alpha})\skal{Y_{k}}{Y_{\ell}}
							= \psi(J(\partial_{j}),\partial_{\ell})
	\end{equation*}
	and
	\begin{align*}
		\skal{\tilde{J}(\nabla^{S}_{Z}Y_{j})}{Y_{\ell}} &= -\skal{\nabla^{S}_{Z}Y_{j}}{\tilde{J}Y_{\ell}}
						= -g^{k\alpha}\psi(\partial_{j},\partial_{\alpha})\skal{Y_{k}}{\tilde{J}Y_{\ell}}\\
						&= -\psi(\partial_{j},J\partial_{\ell}),
	\end{align*}
	i.e., $\nabla^{S}_{Z}\tilde{J}=0 \Leftrightarrow \psi(J\partial_{j},\partial_{\ell})+\psi(\partial_{j},J\partial_{\ell}) =0$.
	\item \noindent
	We have to compute $\nabla_{Z}\tilde{\Omega}$. For $1\leq k \leq m$ define $Z_{k}:=\frac{1}{2}(Y_{k}-i\tilde{J}Y_{k})$. A short computation
	shows
	\begin{equation*}
		(\nabla_{Z}\tilde{\Omega})(Z_{1},\ldots,Z_{m}) = \sqrt{-1}(\Lambda \psi)\tilde{\Omega}(Z_{1},\ldots,Z_{m}),
	\end{equation*}
	i.e., $\nabla^{S}\tilde{\Omega}=0 \Leftrightarrow \Lambda \psi =0$.
	\item \noindent
	This follows from the second statement.
	\item \noindent
	If $(M,\phi,g)$ is a $G_{2}$-manifold with a parallel positive 3-form $\phi$ then
	\begin{align*}
		(\nabla^{S}_{Z}\tilde{\phi})(Y_{\alpha},Y_{\beta},Y_{\gamma}) 
%					&=
%					-g^{k\ell}(\psi(\partial_{\alpha},\partial_{\ell})\phi(\partial_{k},\partial_{\beta},\partial_{\gamma})\\
%						&\qquad +\psi(\partial_{\beta},\partial_{\ell})\phi(\partial_{\alpha},\partial_{k},\partial_{\gamma})\\
%						&\qquad +\psi(\partial_{\gamma},\partial_{\ell})\phi(\partial_{\alpha},\partial_{\beta},\partial_{k}))\\
					&= C_{24}(\psi \otimes \phi)(\partial_{\alpha},\partial_{\beta},\partial_{\gamma})\\
						&\qquad +C_{24}(\psi \otimes \phi)(\partial_{\beta},\partial_{\gamma},\partial_{\alpha})\\
						&\qquad +C_{24}(\psi \otimes \phi)(\partial_{\gamma},\partial_{\alpha},\partial_{\beta})\\
					&= \mathcal{BI}(C_{24}(\psi \otimes \phi))(\partial_{\alpha},\partial_{\beta},\partial_{\gamma}).
	\end{align*}
	\item \noindent
	If $(M,\Omega,g)$ is a $Spin(7)$-manifold with a parallel admissible 4-form $\Omega$ then
	\begin{align*}
		(\nabla^{S}_{Z}\tilde{\Omega})(Y_{\alpha},Y_{\beta},Y_{\gamma},Y_{\delta})
				&= C_{24}(\psi \otimes \Omega)(\partial_{\alpha},\partial_{\beta},\partial_{\gamma},\partial_{\delta})\\
					&\qquad -C_{24}(\psi \otimes \Omega)(\partial_{\beta},\partial_{\gamma},\partial_{\delta},\partial_{\alpha})\\
					&\qquad +C_{24}(\psi \otimes \Omega)(\partial_{\gamma},\partial_{\delta},\partial_{\alpha},\partial_{\beta})\\
					&\qquad -C_{24}(\psi \otimes \Omega)(\partial_{\delta},\partial_{\alpha},\partial_{\beta},\partial_{\gamma})\\
				&= \mathcal{AB}(C_{24}(\psi \otimes \Omega))(\partial_{\alpha},\partial_{\beta},\partial_{\gamma},\partial_{\delta}).
	\end{align*}
\end{enumerate}
\end{proof}
The lemma above provides sufficient conditions for a toric type Lorentzian manifold to have specified screen holonomy. For any complex manifold $X$ we
write
\begin{align*}
	H^{1,1}(X,\Z) &:= \mbox{Im}(H^{2}(X,\Z) \rightarrow H^{2}(X,\C)) \cap H^{1,1}(X),\\
	H^{1,1}(X,\Q) &:= \mbox{Im}(H^{2}(X,\Q) \rightarrow H^{2}(X,\C)) \cap H^{1,1}(X).
\end{align*}
If $X$ is a compact K\"{a}hler manifold the Lefschetz theorem on $(1,1)$-classes implies $H^{1,1}(X,\Z) = NS(X)$ where
$NS(X)$ is the Neron-Severi group of $X$ defined as the image of
\begin{equation*}
	\mbox{Pic}(X)=H^{1}(X,\sheaf^{*}_{X}) \stackrel{c_{1}}{\longrightarrow} H^{2}(X,\Z)
						\stackrel{\Z \subset \C}{\longrightarrow} H^{2}(X,\C).
\end{equation*}
By Lemma $\ref{toric-holonomies}$ we have $Hol(S,\nabla^{S}) \subset U(n)$ if $\psi \in \Lambda^{1,1}(M,J)$.
By definition $[\psi] \in H^{2}(M,\Z)$. Therefore $Hol(S,\nabla^{S}) \subset U(n)$ if $[\psi] \in NS(M,J)$. It is not difficult to construct examples
over $\C{P}^{n}$. In the non-symmetric case we may apply the following
\begin{corollary}\label{screen-unitary}
	Let $(M^{2n},J)$ be a compact simply-connected irreducible K\"{a}hler manifold with $c_{1}(M,J) < 0$ and\footnote{
											Explicit examples can be found in \cite{MR0451180}}
	$g$ its Einstein-K\"{a}hler metric. Let $\alpha \in H^{2}(M,\Z)$ be a Hodge class, e.g., $-c_{1}(M,J)$.
	If $(X=\tilde{X} \times L,\tilde{g}_{f})$ is of toric type over $(M,J,g)$ where $\tilde{X} \rightarrow M$ is
	constructed using a representative of $\alpha$ and if $f \in C^{\infty}$ is sufficiently generic then
	\begin{equation*}
		\liealg{hol}(X,\tilde{g}_{f}) = \begin{cases}
							\liealg{u}(n) \ltimes \R^{2n} &\text{if}~\frac{\partial f}{\partial x} \equiv 0,\\
							(\R \oplus \liealg{u}(n)) \ltimes \R^{2n} &\text{otherwise}.
		                                \end{cases}
	\end{equation*}
\end{corollary}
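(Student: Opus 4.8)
The plan is to pin down the screen holonomy algebra $\liealg{g} = \liealg{hol}(S,\nabla^{S})$ as $\liealg{u}(n)$ — using Lemma \ref{toric-holonomies}(2) for the upper bound and Proposition \ref{screenhoriz} together with the holonomy of $(M,g)$ for the lower bound — and then to read off $\liealg{hol}(X,\tilde{g}_{f})$ from the type classification in Theorem \ref{BBILthm}.

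First I would fix which representative of $\alpha$ enters the construction of Proposition \ref{mainconstr}: take $\psi$ to be the $g$-harmonic representative of $\alpha$. Since $\alpha$ is a Hodge class its image in $H^{2}(M,\C)$ lies in $H^{1,1}(M)$, and on the K\"{a}hler manifold $(M,J,g)$ harmonic projection respects the Hodge type, so $\psi \in \Lambda^{1,1}(M,J)$. Forming the toric type manifold $(X = \tilde{X}\times L,\tilde{g}_{f})$ from this $\psi$, Lemma \ref{toric-holonomies}(2) gives $\nabla^{S}\tilde{J} = 0$, so $\liealg{g}$ preserves both the screen metric and the parallel complex structure $\tilde{J}$ on the rank-$2n$ bundle $S$; hence $\liealg{g} \subseteq \liealg{u}(n)$.

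Next I would compute $Hol(M,g)$. Since $M$ is compact and simply-connected, $Hol(M,g) = Hol^{0}(M,g)$; it acts irreducibly, and being K\"{a}hler it lies in $U(n)$. A compact, simply-connected, irreducible Hermitian symmetric space has positive first Chern class, so the hypothesis $c_{1}(M,J) < 0$ rules out the locally symmetric case; Berger's classification then leaves only $U(n)$, $SU(n)$ and $Sp(n/2)$. The K\"{a}hler--Einstein metric $g$ has negative Einstein constant, so it is not Ricci-flat and its holonomy is not contained in $SU(n)$; therefore $Hol(M,g) = U(n)$, i.e. $\liealg{hol}(M,g) = \liealg{u}(n)$. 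As $(X,\tilde{g}_{f})$ is of toric type, Proposition \ref{screenhoriz} gives $Hol(M,g) \subseteq G = Hol(S,\nabla^{S})$, hence $\liealg{u}(n) \subseteq \liealg{g}$, and combining with the previous step, $\liealg{g} = \liealg{u}(n)$.

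Finally I would invoke the type classification. Since $f$ is sufficiently generic, $\liealg{hol}(X,\tilde{g}_{f})$ is indecomposable and not of type 4, and by Proposition \ref{exclty3} it is also not of type 3; thus it is of type 1 or type 2 and carries the full translation part $\R^{2n}$, being of type 1 precisely when $\frac{\partial f}{\partial x}|_{p}\neq 0$ for some $p \in X$ and of type 2 otherwise. Substituting $\liealg{g} = \liealg{u}(n)$ and screen dimension $2n$ into the descriptions in Theorem \ref{BBILthm} gives $\liealg{hol}(X,\tilde{g}_{f}) = \liealg{u}(n)\ltimes\R^{2n}$ when $\frac{\partial f}{\partial x}\equiv 0$ and $(\R\oplus\liealg{u}(n))\ltimes\R^{2n}$ otherwise. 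I expect the only real obstacle to be the precise identification $Hol(M,g) = U(n)$: it rests on Hodge theory (so that $\alpha$ admits a $(1,1)$-type harmonic representative, which is what makes Lemma \ref{toric-holonomies}(2) applicable) and on using $c_{1}(M,J) < 0$ to exclude both the symmetric and the Ricci-flat cases from Berger's list.
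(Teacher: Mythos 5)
Your proposal is correct and follows essentially the same route as the paper: identify $\liealg{hol}(M,g)=\liealg{u}(n)$ (the paper also rules out the symmetric case via the negative Einstein constant and then cites Berger implicitly), obtain the lower bound on the screen holonomy from Proposition \ref{screenhoriz}, the upper bound from Lemma \ref{toric-holonomies}(2) applied to a $(1,1)$-representative, and finish with the type dichotomy. You merely spell out steps the paper leaves implicit, in particular the need to choose a $(1,1)$ (e.g.\ harmonic) representative of $\alpha$ and the exclusion of types 3 and 4 via Proposition \ref{exclty3} and the genericity of $f$.
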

\begin{proof}
	By Aubin-Yau theorem we have an Einstein-K\"{a}hler metric which is unique up to homothety and $-c_{1}(M,J)$ is a Hodge class.
	Since $(M,J,g)$ is compact and simply connected with negative Einstein constant it is not
	symmetric and w.l.o.g. we have $\liealg{hol}(M,J,g) = \liealg{u}(n)$. Hence, the statement follows from
	Proposition $\ref{screenhoriz}$.
\end{proof}
Next, we construct Lorentzian manifolds such that $Hol(S,\nabla^{S}) = SU(n)$. In the following we say $(M,J,g)$ is
a Calabi-Yau manifold if $M$ is a compact K\"{a}hler manifold with $Hol(M,J,g)=SU(n)$.
Since $(M,J,g)$ is compact K\"{a}hler the Laplace operator commutes with the dual Lefschetz operator $\Lambda$
and we can define the primitive cohomology group
\begin{equation*}
	H^{1,1}_{prim}(M,J) := \mbox{Ker}(\Lambda: H^{1,1}(M,J) \longrightarrow \C).
\end{equation*}
Moreover, the Lefschetz decomposition implies
\begin{equation*}
	H^{1,1}((M,J),\R)=\R [\omega] \oplus H^{1,1}_{prim}((M,J),\R).
\end{equation*}
Let $\check{\Omega} = h \cdot \tilde{\Omega}$ for some nowhere-vanishing function $h \in C^{\infty}(X)$.
We say $(\tilde{J},\check{\Omega})$ defines an $SU(n)$-structure on $(S,\nabla^{S})$ if
$\nabla_{\cdot}^{S}\tilde{J}=\nabla_{\cdot}^{S}\check{\Omega}=0$.
Clearly, this implies $Hol(S,\nabla^{S}) \subset SU(n)$.
\begin{corollary}\label{screen-su}
	Let $(M,J,g)$ be a Calabi-Yau manifold. Let $(X=\tilde{X} \times L,\tilde{g}_{f})$ be of toric type over $(M,J,g)$ where
	$\tilde{X} \rightarrow M$ is constructed using a representative $\alpha$ of some $[\alpha] \in NS(M,J)$ and $f \in C^{\infty}(X)$
	is sufficiently generic. Suppose $\Lambda [\alpha] \in \Z$ or $L = \R$.
	Then $(\tilde{J},e^{-\sqrt{-1}(\Lambda \alpha)z}\tilde{\Omega})$ defines an $SU(n)$-structure on
	$(S,\nabla^{S})$ if and only if $\alpha$ is the harmonic representative of $[\alpha]$. In this case we have
	\begin{equation*}
		\liealg{hol}(X,\tilde{g}_{f}) = \begin{cases}
							\liealg{su}(n) \ltimes \R^{2n} &\text{if}~\frac{\partial f}{\partial x} \equiv 0,\\
							(\R \oplus \liealg{su}(n)) \ltimes \R^{2n} &\text{otherwise}.
		                                \end{cases}
	\end{equation*}
\end{corollary}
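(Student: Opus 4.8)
The plan is to reduce everything to a direct computation of $\nabla^{S}$ on the candidate $SU(n)$-structure, using the toric-type realization of the screen bundle from Proposition~\ref{screenhoriz}: $S=\mbox{span}\{V,Z\}^{\perp}$ is the horizontal distribution of $\tilde X\to M$, with $V$ the vertical field, $Z=-\tfrac12 fV+\tfrac{\partial}{\partial z}$, and on horizontal directions $\nabla^{S}$ is, via the identification $Y_{i}\leftrightarrow\partial_{i}$, the pull-back of the Levi-Civita connection of $(M,g)$; accordingly $\tilde J$, $\tilde\Omega$ and $\check\Omega:=h\,\tilde\Omega$ with $h:=e^{-\sqrt{-1}(\Lambda\alpha)z}$ are the lifts of $J$, $\Omega$ and $h\,\Omega$. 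The two inputs I would quote are: $\nabla^{S}\tilde J=0$ iff $\alpha\in\Lambda^{1,1}(M,J)$ pointwise (Lemma~\ref{toric-holonomies}(2)), and the computation in the proof of Lemma~\ref{toric-holonomies}(3), which in fact gives $\nabla^{S}_{Z}\tilde\Omega=\sqrt{-1}(\Lambda\alpha)\,\tilde\Omega$, while $\nabla^{S}_{Y_{i}}\tilde\Omega=\nabla^{S}_{V}\tilde\Omega=0$ because $\Omega$ is $\nabla^{(M,g)}$-parallel and transport along $V$ is trivial on $S$ (as $\nabla_{\partial_{0}}Y_{j}=0$).

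\emph{The core computation.} Since $h$ is independent of the fibre coordinate $x$, Leibniz gives $\nabla^{S}_{V}\check\Omega=0$. Along $Z$ one has $Zh=\tfrac{\partial h}{\partial z}=-\sqrt{-1}(\Lambda\alpha)h$, hence
\begin{equation*}
  \nabla^{S}_{Z}\check\Omega=(Zh)\tilde\Omega+h\,\nabla^{S}_{Z}\tilde\Omega
     =-\sqrt{-1}(\Lambda\alpha)h\,\tilde\Omega+\sqrt{-1}(\Lambda\alpha)h\,\tilde\Omega=0,
\end{equation*}
which is exactly the reason for the exponential factor. Finally $Y_{i}h=\partial_{y^{i}}h=-\sqrt{-1}\,z\,\partial_{y^{i}}(\Lambda\alpha)\,h$, so $\nabla^{S}_{Y_{i}}\check\Omega=(Y_{i}h)\tilde\Omega=-\sqrt{-1}\,z\,\partial_{y^{i}}(\Lambda\alpha)\,\check\Omega$. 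Therefore $\nabla^{S}\check\Omega=0$ iff the function $\Lambda\alpha$ on $M$ is locally, hence ($M$ connected) globally, constant; when $L=S^{1}$ and $\Lambda[\alpha]\in\Z$ this constancy is already forced by $\check\Omega$ being well defined on $X$, a continuous $\Z$-valued function on connected $M$ being constant. Combined with Lemma~\ref{toric-holonomies}(2), $(\tilde J,\check\Omega)$ defines an $SU(n)$-structure on $(S,\nabla^{S})$ precisely when $\alpha$ is a closed $(1,1)$-form with $\Lambda\alpha$ constant.

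\emph{The Hodge-theoretic step}, which I expect to be the only genuine obstacle, is the claim: on a compact K\"ahler manifold a closed $(1,1)$-form $\alpha$ has $\Lambda\alpha$ constant if and only if $\alpha$ is harmonic. If $\alpha$ is the harmonic representative of $[\alpha]\in NS(M,J)\subset H^{1,1}(M,J)$, then $\alpha$ is of type $(1,1)$ and $\Lambda\alpha$ is a harmonic function, hence the constant $\Lambda[\alpha]$ (which is why $\Lambda[\alpha]\in\Z$ makes $h$ well defined when $L=S^{1}$). Conversely, with $c:=\Lambda\alpha$ constant I would set $\alpha_{0}:=\alpha-\tfrac{c}{\Lambda\omega}\,\omega$: this is a closed $(1,1)$-form with $\Lambda\alpha_{0}=0$, i.e.\ primitive, hence harmonic, since on a compact K\"ahler manifold $*\alpha_{0}$ is a constant multiple of $\omega^{\,n-2}\wedge\alpha_{0}$ by Weil's identity, so $d^{*}\alpha_{0}=\pm *d*\alpha_{0}=0$; as $\omega$ is harmonic, so is $\alpha=\tfrac{c}{\Lambda\omega}\,\omega+\alpha_{0}$. (Equivalently one may write the exact part of $\alpha$ as $\sqrt{-1}\,\partial\bar\partial\varphi$ via the $\partial\bar\partial$-lemma and note $\Lambda\sqrt{-1}\,\partial\bar\partial\varphi$ is a nonzero constant times $\Delta_{g}\varphi$, which is constant only for $\varphi$ constant.) This gives the stated equivalence.

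For the holonomy conclusion I would argue as in Corollary~\ref{screen-unitary}. When $\alpha$ is the harmonic representative, $\nabla^{S}\tilde J=\nabla^{S}\check\Omega=0$ and $\nabla^{S}$ is metric, so $Hol^{0}(S,\nabla^{S})\subset SU(n)$ and $\liealg{g}=\liealg{hol}(S,\nabla^{S})\subset\liealg{su}(n)$; conversely the Calabi-Yau metric, being Ricci-flat K\"ahler, is real-analytic, so $\liealg{hol}^{loc}_{\pi(p)}(M,g)=\liealg{hol}(M,g)=\liealg{su}(n)$ at every $p$, and Proposition~\ref{screenhoriz} gives $\liealg{su}(n)\subset\liealg{hol}^{loc}_{(p,q)}(\mathcal S,\nabla^{\mathcal S})\subset\liealg{hol}(\mathcal S,\nabla^{\mathcal S})=\liealg{g}$, whence $\liealg{g}=\liealg{su}(n)$. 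Since $f$ is sufficiently generic, $\liealg{hol}(X,\tilde g_{f})$ is indecomposable and not of type~4; by Proposition~\ref{exclty3} it is not of type~3, hence of type~1 or~2 by Theorem~\ref{BBILthm}, and of type~1 exactly when $\tfrac{\partial f}{\partial x}|_{p}\neq0$ for some $p$. Inserting $\liealg{g}=\liealg{su}(n)$ into the type~1 and type~2 normal forms yields the two displayed cases.
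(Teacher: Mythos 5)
Your proof is correct and follows essentially the same route as the paper's (much terser) argument: the Leibniz computation reducing the $SU(n)$-condition to the pointwise constancy of $\Lambda\alpha$, harmonicity of the Lefschetz-decomposed representative for one direction, and the $\partial\bar\partial$-lemma plus K\"ahler identities for the converse --- your parenthetical is exactly the paper's stated argument, and your Weil-identity alternative for the primitivity step is a harmless variant. The holonomy conclusion via Proposition~\ref{screenhoriz}, Proposition~\ref{exclty3} and sufficient genericity is handled just as in Corollary~\ref{screen-unitary}, which is what the paper leaves implicit.
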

\begin{proof}
	If $[\alpha] \in NS(M,J)$ and $\alpha \in [\alpha]$ then $\Lambda [\alpha] = \mbox{const.}$ by the Lefschetz decomposition and
	$\nabla_{Z}^{S}(e^{-\sqrt{-1}(\Lambda \alpha)z}\tilde{\Omega})=0$ by Lemma $\ref{toric-holonomies}$ where $z$ is the coordinate on $L$.
	Moreover, for the harmonic representative of $[\alpha]$ we have $\nabla_{\cdot}^{S}(e^{-\sqrt{-1}(\Lambda \alpha)z}\tilde{\Omega})=0$.
	The converse is implied by the $\partial\bar{\partial}$-lemma and the K\"{a}hler identities.
\end{proof}
\begin{remark}\par \noindent
	\begin{enumerate}
		\item \noindent
		Let $(M,J,g)$ be a Calabi-Yau $n$-fold such that $n \geq 3$. In this case \cite{MR730926}[Prop. 2b]
		implies $h^{2,0}=h^{0,2}=0$. Assume that $[\omega] \in H^{2}(M,\Q)$ is its K\"{a}hler class. Then
		$\dim_{\Q}H^{1,1}_{prim}((M,J),\Q)=b_{2}-1$ (see \cite{MR0111056}), i.e., $rk(H^{1,1}_{prim}((M,J),\Z))=b_{2}-1$.
		Moreover, if $(X,\tilde{g}_{f})$ is as above and $\alpha \in [\alpha] \in H^{1,1}_{prim}((M,J),\Z)$ is harmonic
		then $(\tilde{J},\tilde{\Omega})$ defines an $SU(n)$-structure on $(S,\nabla^{S})$.
		\item \noindent
		If $(M,J,g)$ is an exceptional $K3$-surface\footnote{We call a $K3$-surface exceptional if its Picard number is maximal.}
		we can choose a basis $(b_{1},b_{2},[\omega],c_{1},\ldots,c_{19})$ of $H^{2}(M,\Q)$
		such that
		\begin{equation*}
			\mbox{span}_{\C}\{b_{1},b_{2}\} = H^{2,0}(M,J) \oplus H^{0,2}(M,J).
		\end{equation*}
		Using the intersection form and the Gram-Schmidt algorithm we derive a basis
		$\tilde{c}_{1},\ldots,\tilde{c}_{19}$ of $H^{1,1}_{prim}((M,J),\Q)$,i.e., $rk H^{1,1}_{prim}((M,J),\Z)=19$.
		Hence, the same remark applies for any harmonic $\alpha \in [\alpha] \in H^{1,1}_{prim}((M,J),\Z)$.
		\item \noindent
		Since $\liealg{su}(2)=\liealg{sp}(1)$ Corollary $\ref{screen-su}$ gives examples with symplectic screen holonomy.\qed
	\end{enumerate}
\end{remark}
Finally, we construct Lorentzian manifolds such that $Hol(S,\nabla^{S})=Sp(n)$. In the following a simply connected, compact K\"{a}hler manifold $X$
with $H^{2,0}(X)=\C [\sigma]$ where $\sigma$ is everywhere non-degenerate is called holomorphic symplectic. We write $\rho(X)$ for the
Picard number of a K\"{a}hler manifold $X$.
\begin{theorem}[K. Oguiso \cite{MR1966023}]\label{picard-hyperkaehler}
	Let $X$ be a holomorphic symplectic manifold with $b_{2}=N+2$. Then, for each integer $0 \leq k \leq N$ there exists a holomorphic symplectic
	manifold $X'$ such that $X$ and $X'$ are deformation equivalent and $\rho(X')=k$. \qed
\end{theorem}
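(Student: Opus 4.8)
The natural route is through Torelli theory for holomorphic symplectic manifolds, and the plan is to reduce the statement to a lattice computation. First I would record the meaning of the Picard number at the level of the period. On $H^{2}(X,\Z)$ there is the Beauville--Bogomolov form $q$, nondegenerate of signature $(3,b_{2}-3)$, and $h^{2,0}(X)=1$. If $X'$ is any holomorphic symplectic manifold deformation equivalent to $X$, a marking identifies $H^{2}(X',\Z)$ with $(H^{2}(X,\Z),q)$; writing $[\sigma]\in\mathbb{P}(H^{2}(X,\C))$ for its period point, so that $H^{2,0}(X')=\C\sigma$, Beauville's Hodge-theoretic description gives $H^{1,1}(X')=\{\sigma,\bar\sigma\}^{\perp}$ (orthogonality always taken with respect to $q$). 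Since an integral class is $q$-orthogonal to $\sigma$ iff it is $q$-orthogonal to $\bar\sigma$, one gets $NS(X')=H^{2}(X,\Z)\cap\sigma^{\perp}$ and hence
\[
	\rho(X')=\operatorname{rk}\bigl(H^{2}(X,\Z)\cap\sigma^{\perp}\bigr).
\]
Thus $\rho(X')$ depends only on the period point, and the whole problem becomes a statement about the lattice $(H^{2}(X,\Z),q)$.

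Next I would bring in the global structure. Let $\Omega=\{[\sigma]\in\mathbb{P}(H^{2}(X,\C)):q(\sigma)=0,\ q(\sigma,\bar\sigma)>0\}$ be the period domain. By the local Torelli theorem of Bogomolov and Beauville the Kuranishi space of a holomorphic symplectic manifold is smooth of dimension $b_{2}-2=\dim\Omega$ with the period map a local isomorphism, and by Huybrechts' surjectivity theorem the period map is onto $\Omega$ already when restricted to any one connected component $\mathcal{M}^{0}$ of the moduli space of marked holomorphic symplectic manifolds. Any two members of $\mathcal{M}^{0}$ are deformation equivalent, $\mathcal{M}^{0}$ being connected and locally a Kuranishi family. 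So it suffices to exhibit, for each $0\le k\le N$, a point $[\sigma]\in\Omega$ with $\operatorname{rk}(H^{2}(X,\Z)\cap\sigma^{\perp})=k$; the corresponding $X'$ in the component containing $X$ is then deformation equivalent to $X$ and has $\rho(X')=k$.

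Finally I would carry out the lattice construction. For $0\le k\le b_{2}-3$ pick a negative definite sublattice of $H^{2}(X,\Z)$ of rank $k$ (possible, the maximal negative definite subspace having dimension $b_{2}-3$) and replace it by its saturation $S$, which is primitive of the same rank and signature; then $T:=S^{\perp}$ has rank $b_{2}-k\ge 3$ and $q|_{T\otimes\R}$ has signature $(3,b_{2}-3-k)$. For the extremal value $k=N=b_{2}-2$ pick instead a primitive positive definite rank-$2$ sublattice $T$ --- take two $q$-orthogonal integral classes of positive square and saturate --- and set $S:=T^{\perp}$, of rank $b_{2}-2$. In all cases $T\otimes\R$ has at least two positive directions, so the quadric $Q:=\{[\sigma]\in\mathbb{P}(T\otimes\C):q(\sigma)=0\}$ is smooth, irreducible, and meets $\{q(\sigma,\bar\sigma)>0\}$ in a nonempty open set. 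For every $\lambda\in H^{2}(X,\Z)$ with nonzero $T$-component the equation $q(\lambda,\sigma)=0$ cuts out a proper closed subset of $Q$ (as $q|_{T}$ is nondegenerate, $Q$ lies in no hyperplane); removing this countable family of proper closed subsets from the nonempty open set leaves a point $[\sigma]\in\Omega$ with $\sigma^{\perp}\cap H^{2}(X,\Z)=S$, hence $\rho=k$. When $k=N$ the variety $Q$ is $0$-dimensional, but then $\sigma^{\perp}\cap(T\otimes\Q)=0$ follows directly from nondegeneracy of $q|_{T}$, so no genericity is needed. Combining this with the previous step proves the theorem.

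The substantial ingredients are the Torelli-type inputs --- local Torelli of Bogomolov--Beauville and, above all, the surjectivity of the period map on a single component (Huybrechts); everything else is elementary lattice theory together with a Baire-category argument, the only delicate point being the existence of primitive sublattices of prescribed rank and signature, handled by saturation above (Nikulin's embedding theorems would also serve). Avoiding the surjectivity theorem would force one to chain Kuranishi families and argue through density of Noether--Lefschetz loci; but pinning $\rho$ to exactly $k$ rather than merely $\ge k$, and in particular reaching $\rho=N$ at all, would still require essentially this argument, so I expect the use of global Torelli to be the true crux.
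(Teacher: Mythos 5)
The paper does not prove this statement at all: it is quoted verbatim from Oguiso's article \cite{MR1966023} and stamped with a \qed, so there is no internal proof to compare against. Your argument is, as far as I can check, a correct and complete proof, and it follows what is essentially Oguiso's own route: translate $\rho$ into the rank of $H^{2}(X,\Z)\cap\sigma^{\perp}$ via the Beauville--Bogomolov form, invoke local Torelli plus Huybrechts' surjectivity of the period map on a single connected component of the marked moduli space to realize any period point by a deformation-equivalent $X'$, and then produce periods with prescribed $\mathrm{rk}(H^{2}(X,\Z)\cap\sigma^{\perp})$ by a lattice/Baire-category argument. The only steps I would spell out a little more are the existence of the integral sublattices you saturate (for the negative definite rank-$k$ sublattice one iterates: each successive orthogonal complement still has negative index $\geq 1$, hence is indefinite and represents negative values on integral vectors; the positive rank-$2$ case is analogous using positive index $3$), and the observation that $Q\cap\lambda_{T}^{\perp}$ is nowhere dense in the \emph{open} set $\{q(\sigma,\bar\sigma)>0\}\subset Q$ because $Q$ is irreducible for $\mathrm{rk}\,T\geq 3$ --- both are routine and you gesture at them correctly.
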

Using Oguiso's theorem we can find a holomorphic symplectic structure with maximal Picard number on the differentiable manifold underlying
any holomorphic symplectic manifold.
\begin{corollary}
	Let $(M^{4n},J)$ be a holomorphic symplectic manifold with $b_{2} \geq 4$ and $\rho(M,J) = b_{2}-2$.
	Then, there exists an irreducible hyperk\"{a}hler structure $(M,J,J_{2},J_{3},g)$ with K\"{a}hler class $[\omega] \in H^{2}(M,\Q)$ and
	$0 \neq [\psi] \in H^{1,1}(M,J) \cap H^{1,1}(M,J_2) \cap H^{2}(M,\Z)$ on $M$. Moreover, if $(X=\tilde{X} \times L,\tilde{g}_{f})$ is of
	toric type over $(M,J,g)$ where $\tilde{X} \rightarrow M$ is constructed using the harmonic representative of $[\psi]$
	and $f \in C^{\infty}(X)$ is sufficiently generic then
	\begin{equation*}
		\liealg{hol}(X,\tilde{g}_{f}) = \begin{cases}
							\liealg{sp}(n) \ltimes \R^{4n} &\text{if}~\frac{\partial f}{\partial x} \equiv 0,\\
							(\R \oplus \liealg{sp}(n)) \ltimes \R^{4n} &\text{otherwise}.
		                                \end{cases}
	\end{equation*}
\end{corollary}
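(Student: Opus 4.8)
The plan is to reduce the statement to a combination of Oguiso's theorem (Theorem~\ref{picard-hyperkaehler}), the Hitchin--Kodaira--Yau machinery for hyperkähler metrics, and the computation of screen holonomy in Lemma~\ref{toric-holonomies}(4), together with Propositions~\ref{screenhoriz} and~\ref{exclty3}. First I would establish the existence of the hyperkähler structure with the required Picard number. Starting from the holomorphic symplectic manifold $(M^{4n},J)$ with $b_2\geq 4$, Oguiso's theorem (applied with $N=b_2-2$ and $k=b_2-2$, the maximal value) produces a deformation-equivalent holomorphic symplectic manifold with $\rho=b_2-2$; since the underlying differentiable manifold is unchanged under deformation, we may assume $(M,J)$ itself has $\rho(M,J)=b_2-2$ to begin with (this is exactly the remark preceding the corollary). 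By Yau's solution of the Calabi conjecture, $(M,J)$ carries a Ricci-flat Kähler metric $g$ in any chosen Kähler class; since $M$ is holomorphic symplectic the holonomy of $g$ is contained in $Sp(n)$, and irreducibility of $(M,J)$ forces $\mathfrak{hol}(M,J,g)=\mathfrak{sp}(n)$ exactly, yielding the twistorial family of complex structures and in particular a second Kähler complex structure $J_2$ making $(M,J,J_2,J_3,g)$ irreducible hyperkähler with a rational Kähler class $[\omega]\in H^2(M,\Q)$.

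Next I would produce the class $[\psi]$. The point is a Hodge-theoretic dimension count: since $\rho(M,J)=b_2-2$ is maximal, the Néron--Severi lattice $NS(M,J)=H^{1,1}(M,J)\cap H^2(M,\Z)$ has rank $b_2-2$, and similarly for $J_2$. I would intersect these two lattices inside $H^2(M,\Z)$: the $(1,1)$-classes for $J$ and for $J_2$ span hyperplanes (codimension-$2$ subspaces) of $H^2(M,\R)$, and using the structure of the twistor family one checks that $H^{1,1}(M,J)\cap H^{1,1}(M,J_2)$ has dimension $b_2-4\geq 0$ plus the line $\R[\omega]$, so it is non-trivial (the Kähler class itself lies in both, but to get a class whose associated bundle is topologically non-trivial one wants $[\psi]$ not proportional to $[\omega]$, which is where $b_2\geq 4$ is used—strictly, $b_2>4$; if $b_2=4$ one must argue more carefully that the intersection contains a class independent of $[\omega]$, or one accepts $[\psi]$ possibly proportional to $[\omega]$, which still works). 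Choosing $0\neq[\psi]\in H^{1,1}(M,J)\cap H^{1,1}(M,J_2)\cap H^2(M,\Z)$ and letting $\psi$ be its harmonic (hence $\nabla^{(M,g)}$-parallel? no---harmonic) representative, Lemma~\ref{toric-holonomies}(4) gives $\nabla^S\tilde J_1=\nabla^S\tilde J_2=\nabla^S\tilde J_3=0$, because $\psi\in\Lambda^{1,1}(M,J_1)\cap\Lambda^{1,1}(M,J_2)$ pointwise (the harmonic representative is of pure type $(1,1)$ for both complex structures, since being harmonic is metric-dependent only and the type decomposition of a harmonic form matches the Hodge class). Hence $Hol(S,\nabla^S)\subset Sp(n)$.

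Now I would pin down the holonomy of the total space. By Proposition~\ref{screenhoriz}, $\mathfrak{hol}^{loc}_{\pi(p)}(M,g)\subset\mathfrak{hol}^{loc}(\mathcal S,\nabla^{\mathcal S})$, so $\mathfrak{sp}(n)=\mathfrak{hol}(M,g)\subset\mathfrak g:=pr_{\mathfrak{so}(4n)}(\mathfrak{hol}(X,\tilde g_f))$; combined with the inclusion $Hol(S,\nabla^S)\subset Sp(n)$ and the identification $Hol(S,\nabla^S)=G$ from Section~2, we get $\mathfrak g=\mathfrak{sp}(n)$ exactly. Since $f$ is sufficiently generic, $\mathfrak{hol}(X,\tilde g_f)$ is indecomposable and not of type~4, and by Proposition~\ref{exclty3} it is not of type~3 either; the only remaining possibilities are type~2, namely $\mathfrak g\ltimes\R^{4n}=\mathfrak{sp}(n)\ltimes\R^{4n}$, and type~1, namely $(\R\oplus\mathfrak{sp}(n))\ltimes\R^{4n}$. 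Proposition~\ref{exclty3} says precisely that the type is~1 iff $\tfrac{\partial f}{\partial x}|_p\neq 0$ for some $p$, and type~2 otherwise (using that type~2 and type~4 are the $R^{\nabla^\Xi}=0$ cases and type~4 is excluded); this is the stated dichotomy, and the proof is complete.

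The main obstacle I anticipate is the Hodge-theoretic step guaranteeing that $H^{1,1}(M,J)\cap H^{1,1}(M,J_2)\cap H^2(M,\Z)$ is non-zero \emph{and} that its harmonic representatives are simultaneously of type $(1,1)$ for $J_1$ and $J_2$; one must be careful that maximality of $\rho$ for $J$ is transported to $J_2$ (both are fibres of the same twistor family, so the period arguments apply), and that the lattice-theoretic intersection of two rank-$(b_2-2)$ sublattices of the rank-$b_2$ lattice $H^2(M,\Z)$ is non-degenerate—this is where $b_2\geq 4$ enters as a counting bound, and the borderline case $b_2=4$ should be checked explicitly (e.g.\ the K3 case $n=1$, where $b_2=22$, is comfortably fine, so the constraint is only an issue in principle for the hypothetical low-$b_2$ holomorphic symplectic manifolds).
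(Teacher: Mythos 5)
Your overall architecture (hyperk\"ahler metric via Calabi--Yau, a simultaneous integral $(1,1)$-class, Lemma \ref{toric-holonomies}(4), then Propositions \ref{screenhoriz} and \ref{exclty3}) matches the paper, but the Hodge-theoretic step contains a genuine error. For a hyperk\"ahler structure $(M,J_1,J_2,J_3,g)$ the K\"ahler form $\omega_1$ is a real linear combination of the real and imaginary parts of the holomorphic symplectic form of $(M,J_2)$, so $[\omega_1]\in H^{2,0}(M,J_2)\oplus H^{0,2}(M,J_2)$; it is \emph{not} of type $(1,1)$ for $J_2$. Hence your parenthetical claim that ``the K\"ahler class itself lies in both'' is false, and your fallback for the borderline case---``one accepts $[\psi]$ possibly proportional to $[\omega]$, which still works''---fails: for $\psi=\omega_1$ Lemma \ref{toric-holonomies}(4) does not apply and one only gets $Hol(S,\nabla^S)\subset U(2n)$. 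The correct identification, which is the content of the paper's proof (via Verbitsky's $\liealg{su}(2)$-action generated by $\mbox{ad}J_1,\mbox{ad}J_2,\mbox{ad}J_3$), is
\begin{equation*}
	H^{1,1}(M,J_1)\cap H^{1,1}(M,J_2) = H_{inv} = H^{1,1}_{prim}(M,J_1),
\end{equation*}
of dimension $b_2-3\geq 1$; this follows from $H^2(M,\R)=\langle[\omega_1],[\omega_2],[\omega_3]\rangle\oplus H_{inv}$ and makes your worry about the case $b_2=4$ evaporate.

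Second, your route to integrality of $[\psi]$---intersecting $NS(M,J)$ with $NS(M,J_2)$---needs $\rho(M,J_2)$ to be large, which you flag but do not establish. The paper avoids this: since $\rho(M,J)=b_2-2$ is maximal, $H^{1,1}(M,J)$ carries the rational structure $NS(M,J)\otimes\Q$, and $H^{1,1}_{prim}(M,J)$ is the orthogonal complement of the rational class $[\omega]$ with respect to the rational Hodge--Riemann pairing; Gram--Schmidt then yields a rational basis $c_1,\ldots,c_{b_2-3}$ of $H^{1,1}_{prim}(M,J)$, and a nonzero integral multiple of any $c_i$ serves as $[\psi]$. With these two repairs the rest of your argument (harmonic representatives of simultaneous $(1,1)$-classes are pointwise of type $(1,1)$ for both structures because harmonicity depends only on $g$; the holonomy dichotomy via Proposition \ref{exclty3} and sufficient genericity of $f$) goes through as you describe.
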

\begin{proof}
	We can find a K\"{a}hler class $[\omega] \in H^{2}(M,\Q)$ since $\rho(M,J)$ is maximal and Beauville's theorem \cite{MR730926}[Prop. 4.2]
	implies the existence of a hyperk\"{a}hler structure $(M,J=J_{1},J_{2},J_{3},g)$ on $M$ where $[\omega]$ is the K\"{a}hler class of
	$(M,J,g)$. We define operators
	\begin{equation*}
		\mbox{ad}J_{i} : \Lambda^{p,q}_{J_{i}}M \rightarrow \Lambda^{p+q}M \qquad \text{with} \qquad \eta \mapsto (p-q)\sqrt{-1}\eta.
	\end{equation*}
	It is shown in \cite{verbitsky-thesis}[Prop. 2.1] that the Lie algebra $\liealg{g}_{M}$ generated by
	$\mbox{ad}J_{1},\mbox{ad}J_{2},\mbox{ad}J_{3}$ is isomorphic to $\liealg{su}(2)$. Moreover, its action commutes with the Laplace operator and
	therefore induces an $\liealg{su}(2)$-action on the cohomology of $M$. In particular, \cite{verbitsky-thesis}[Prop. 5.2] implies
	$H_{inv} = H^{1,1}_{prim}(M)$ where $H_{inv}$ is the space of all $\liealg{g}_{M}$ invariant elements in $H^{2}(M)$. Finally, if $\alpha$ is
	the harmonic representant of some $[\alpha] \in H^{2}(M)$ then by definition $\alpha \in \Lambda^{1,1}_{J_{1}}M \cap \Lambda^{1,1}_{J_{2}}M$
	if and only if $[\alpha] \in H_{inv}$. Using the Hodge-Riemann bilinear form we derive a basis $c_{1},\ldots,c_{b_{2}-3} \in H^{2}(M,\Q)$ of
	$H^{1,1}_{prim}(M,J)$.
\end{proof}
\section{pp-waves and completeness}
Now we are in the position to give examples of non-trivial spaces realizing the holonomy algebras $\R \ltimes \R^{n}$ and $\R^{n}$.
As in \cite{MR2435292} we call an reducible indecomposable Lorentzian manifold $(X,\tilde{g})$ a $pr$-wave if $\liealg{hol}(X,\tilde{g})=\R \ltimes \R^{n}$
and a $pp$-wave if $\liealg{hol}(X,\tilde{g})=\R^{n}$. Lemma $\ref{toric-holonomies}$ implies
\begin{example}\label{pp-pr-wave-ex}
	Let $T^{n}= S^{1} \times \ldots \times S^{1}$ be the $n$-dimensional torus and $g$ the standard flat Riemannian metric on $T^{n}$. The standard
	coordinates induce a global trivialization $(\frac{\partial}{\partial y^{1}},\ldots,\frac{\partial}{\partial y^{n}})$ of $TT^{n}$ and $dy^{1} \wedge dy^{2}$
	is the volume form of $T^{2} \hookrightarrow T^{n}$. On $T^{n}$ we choose the $S^{1}$-bundle $\tilde{X} \rightarrow T^{n}$ defined by $cdy^{1} \wedge dy^{2}$.
	If $(X=\tilde{X} \times L,\tilde{g}_{f})$ is of toric type and $f \in C^{\infty}(X)$ is sufficiently generic then
	\begin{equation*}
		\liealg{hol}(X,\tilde{g}_{f}) = \begin{cases}
								\R^{n} & \text{if}~\frac{\partial f}{\partial x} \equiv 0,\\
								\R \ltimes \R^{n} & \text{otherwise}.
							\end{cases}
	\end{equation*}
	\qed
\end{example}
The only possible reducible indecomposable holonomy algebras in dimension $3$ are $\R$ and $\R \ltimes \R$. We conclude
\begin{example}\label{comp-tot-twist}
	Let $T^{2}$ be the flat torus with standard coordinates $(y^{1},y^{2})$. Define $X \rightarrow T^{2}$ using the volume form and Proposition \ref{mainconstr}
	with $\eta = dy^{2}$. If $f \in C^{\infty}(X)$ is sufficiently generic then
	\begin{equation*}
		\liealg{hol}(X,\tilde{g}_{f}) = \begin{cases}
								\R & \text{if}~\frac{\partial f}{\partial x} \equiv 0,\\
								\R \ltimes \R & \text{otherwise}.
							\end{cases}
	\end{equation*}
	In particular, $X$ is totally twisted.
\end{example}
\begin{proof}
	If $X = S^{1} \times Y$ then Gysin's sequence implies $b_{2}(X)= b_{2}(Y) + b_{1}(Y) = 2$ and $b_{1}(X)=1+b_{1}(Y)=2$, i.e.,
	$\chi(Y)=1$ and $b_{2}(Y)=1$. Finally, the classification of closed surfaces implies a contradiction.
\end{proof}
\begin{corollary}
	Let $T^{n+1}$ be the flat torus with $\psi := dy^{1}\wedge dz$ and $\eta := dz$ where $(y^{1},\ldots,y^{n},z)$ are the standard coordinates.
	If $(X,\tilde{g}_{f})$ is constructed as in Proposition \ref{mainconstr} with $f \in C^{\infty}(T^{n+1})$ sufficiently generic then
	$(X,\tilde{g}_{f})$ is a complete compact pp-wave.
\end{corollary}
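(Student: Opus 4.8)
The plan is to verify two things: that $(X,\tilde g_f)$ has holonomy algebra of pp-wave type (namely $\R^n$ with $n$ here equal to the torus dimension minus one, so $\R \ltimes \R^n$ or $\R^n$), and that the resulting metric is geodesically complete; compactness of $X$ is immediate since $X$ is an $S^1$-bundle over the compact base $T^{n+1}$. First I would recall that $M=T^{n+1}$ is flat and $\eta=dz$ is a nowhere-vanishing closed 1-form, so Proposition~\ref{mainconstr} applies and produces $X$ together with the global 1-form $\theta$ and the metric $\tilde g_f = 2\theta\,\pi^*\eta + f\cdot\pi^*\eta^2 + \pi^*g$. The class $[\tfrac{\psi}{2\pi}]=[\tfrac{dy^1\wedge dz}{2\pi}]$ is (up to scaling, which one absorbs as in Example~\ref{pp-pr-wave-ex}) integral, so a genuine $S^1$-bundle exists. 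Because the base is flat, Lemma~\ref{toric-holonomies}(1) together with $\nabla^{(M,g)}\psi=0$ (the form $dy^1\wedge dz$ has constant coefficients on the flat torus, hence is parallel) shows $Hol^0(S,\nabla^S)=0$, so the screen holonomy is trivial; combined with $f$ sufficiently generic and Proposition~\ref{exclty3}, the holonomy is exactly $\R^n$ if $\partial f/\partial x\equiv 0$ and $\R\ltimes\R^n$ otherwise. In particular, choosing $f$ with $\partial f/\partial x\equiv 0$ — i.e. $f$ a pullback from $M$, still sufficiently generic — yields a pp-wave; I would state the corollary for exactly this choice (or note that one may take any sufficiently generic $f$ constant on fibers, which also makes the vertical field parallel by Proposition~\ref{mainconstr}(4), consistent with pp-wave type).

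For completeness I would argue directly with the geodesic equations in the Walker-type coordinates of Proposition~\ref{mainconstr}(2). With $f$ constant on the fibers, $\tilde g_f$ in local coordinates $(x,y^i,z)$ takes the form $2dx\,dz + \tilde u_i\,dy^i dz + \tilde f\,dz^2 + g_{ij}\,dy^i dy^j$ with all coefficient functions independent of $x$ and (since the torus metric is flat) the $g_{ij}$ constant. The key structural point is that $\partial/\partial x$ is a parallel null vector field, so $\tilde g_f(\dot\gamma,\partial_x)=\dot z$ is constant along any geodesic $\gamma$; hence the $z$-coordinate is affine, $z(t)=z_0+ct$. Substituting this into the remaining geodesic equations, the equations for the $y^i$ become linear second-order ODEs with coefficients that are smooth bounded functions of $z=z_0+ct$ (because everything is periodic in $y^i$ and $z$, i.e. pulled back from the compact torus, and the $S^1$-fiber is compact), and the equation for $x$ then determines $x(t)$ by a single quadrature once the $y^i(t)$ are known. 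Linear ODEs with globally bounded smooth coefficients have solutions defined for all $t\in\R$, so every geodesic extends to all of $\R$.

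The main obstacle is making the completeness argument fully rigorous rather than merely plausible: one must check that after using $\dot z=\mathrm{const}$ the $y$-equations really are \emph{linear} (not merely quadratic) in the $\dot y^j, y^j$ — this uses that $g_{ij}$ is constant and that the only $x$-dependence has been eliminated, so the dangerous nonlinear terms (Christoffel symbols quadratic in velocities) either vanish or involve only $\dot z$, which is constant — and that their coefficients, being pullbacks of smooth functions on a compact manifold composed with the affine map $t\mapsto z_0+ct$ and evaluated along a curve in the compact fiber direction, are uniformly bounded with bounded derivatives. An alternative, cleaner route I would mention is to invoke a known completeness criterion for pp-waves / plane-wave-type metrics (e.g. the results of Candela--Flores--Sánchez or Leistner--Schliebner on completeness of compact pp-waves), which apply precisely to metrics of this Walker form with coefficients independent of the lightlike coordinate $x$; citing such a criterion reduces the completeness claim to checking the form of $\tilde g_f$, which is exactly Proposition~\ref{mainconstr}(2). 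Either way, once completeness and the holonomy computation are in place, compactness is free and the corollary follows.
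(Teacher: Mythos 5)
Your completeness strategy is essentially the paper's: pass to the universal cover, use the parallel null field to get $\dot z\equiv A$ constant, control the transverse $y$-system, and recover $x(t)$ by quadrature from the conserved energy $E_\gamma$. But the step you yourself flag as the main obstacle is resolved incorrectly. On the universal cover the metric is $g=2\,dx\,dz+(y^1+f+1)\,dz^2+\sum_i(dy^i)^2$ (the local potential of $\theta$ is $y^1\,dz$), and with $\dot z=A$ the transverse equations read $\ddot y^{\,i}=\tfrac{A^2}{2}\bigl(\tfrac{\partial f}{\partial y^i}(\gamma_2(t),z_0+At)+\delta^i_1\bigr)$. The right-hand side depends on the unknown curve $\gamma_2$ through $\nabla f$, so this is \emph{not} a linear ODE; it is a genuinely nonlinear first-order system $\dot\alpha=F(\alpha)$. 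What gives global existence is not linearity but the sublinear growth bound $\|F(x)\|^2\le\|x\|^2+\tfrac{A^4}{4}(C^2+2C+1)$ with $C=\sup|f|+\sup|\nabla f|<\infty$ on the compact torus, followed by Gronwall's lemma to show $\alpha$ cannot leave a compact set in finite time. That estimate is exactly what the paper supplies and what your write-up is missing; "linear ODE with bounded coefficients" is not a correct description of the system, and without the growth bound the argument does not close.

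Two further inaccuracies. Your claim that the coefficients are bounded "because everything is periodic in $y^i$ and $z$" is false on the universal cover: $\psi=dy^1\wedge dz$ is not exact on $T^{n+1}$, so the potential of $\theta$ is the non-periodic form $y^1\,dz$ and the $dz^2$-coefficient $y^1+f+1$ is unbounded. This turns out to be harmless — only its $y$-derivative (a constant plus $\partial_1 f$) enters the transverse equations, and the unbounded contribution to the $x$-equation is absorbed by the quadrature $x(t)=x_0+\frac{1}{2A}\int_0^t(E_\gamma-\cdots)$ — but it must be noticed, since it is precisely the term that distinguishes this metric from a flat product. On the holonomy side, note that this corollary is not of toric type in the paper's sense ($\psi$ involves $dz$, the $\eta$-direction, rather than being pulled back from a factor $N$), so Lemma \ref{toric-holonomies}(1) does not literally apply; one either adapts that computation or argues directly that the screen connection is flat, and since $f\in C^\infty(T^{n+1})$ one automatically has $\frac{\partial f}{\partial x}\equiv 0$, so there is no choice to be made there. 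Your suggested fallback of invoking a general completeness criterion for compact pp-waves is a legitimate alternative route, but it is not the paper's argument and would need a citation that actually covers metrics whose Walker coefficients are unbounded on the universal cover.
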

\begin{proof}
	We have to show that the geodesics are defined for all $t \in \R$. Our approach is motivated by \cite{MR1971289}.
	Let $F_{n+1}: \R^{n+1} \rightarrow T^{n+1}$ be the universal covering map and consider the diagram
	\begin{equation*}
		\xymatrix{
			(\R \times \R^{n+1},(F_{n+1}^{*} \circ F_{1} \times \text{id})^{*}\tilde{g}_{f}) \ar[d]_{F_{1}\times \text{id}} & & \\
			(S^{1} \times \R^{n+1},F_{n+1}^{*}\tilde{g}_{f}) \ar[d]_{pr_{\R^{n+1}}} \ar[rr]^{F_{n+1}^{*}}
													& & (X,\tilde{g}_{f}) \ar[d]^{\pi}\\
			\R^{n+1} \ar[rr]^{F_{n+1}} & & T^{n+1}}
	\end{equation*}
	We write $g := (F_{n+1}^{*} \circ F_{1} \times \text{id})^{*}\tilde{g}_{f}$. Then
	\begin{equation*}
		g = 2dxdz + (y^{1} + f + 1)dz^{2} + \sum_{i=1}^{n}{(dy^{i})^{2}}.
	\end{equation*}
	Let $\gamma(t) = (x(t),y^{i}(t),z(t))$ be a curve on $\R^{n+2}$ of constant energy $E_{\gamma}:= g(\dot{\gamma},\dot{\gamma})$.
	We compute
	\begin{align*}
	0 &= \ddot{z} + \Gamma^{n+1}_{ij}\dot{x}^{i}\dot{x}^{j} = \ddot{z},\\
	0 &= \ddot{y}^{i} + \Gamma^{i}_{jk}\dot{x}^{j}\dot{x}^{k}
		= \ddot{y}^{i} + -\frac{\dot{z}^{2}}{2}(\frac{\partial f}{\partial y^{i}} + \delta^{i}_{1}),\\
	0 &= \ddot{x} + \Gamma^{0}_{ij}\dot{x}^{i}\dot{x}^{j}
		= \ddot{x} -\frac{1}{2}\frac{\partial f}{\partial z}\dot{z}^{2}
					+\sum_{i=1}^{n}{\frac{\partial f}{\partial y^{i}}\dot{z}\dot{y}^{i}}.
\end{align*}
Hence $\dot{z}=:A$ is constant. Let $\gamma_{2}$ be the projection of $\gamma$ to $\R^{n} \subset \R^{n+2}$ given by the $(y^{i})$ coordinates.
Then
\begin{equation}\label{eq-compl-rn}
	\frac{\nabla_{\skal{\cdot}{\cdot}}}{dt}\dot{\gamma}_{2} = \frac{A^{2}}{2}(\mbox{grad}_{\skal{\cdot}{\cdot}}f
											+ \frac{\partial}{\partial y^{1}})
							\qquad \text{on}~(\R^{n},\skal{\cdot}{\cdot}).
\end{equation}
Assume $\gamma_{2}$ is defined for all $t \in \R$. Since $E_{\gamma} = 2\dot{x}\dot{z} + (y^{1} + f + 1)\dot{z}^{2} + \sum_{i=1}{(\dot{y}^{i})^{2}}$
we conclude $x(t) = \dot{x}(0)t + x_{0}$ if $A=0$ and
\begin{equation*}
	x(t) = x_{0} + \frac{1}{2A}\int_{0}^{t}{E_{\gamma}-g(\dot{\gamma}_{2},\dot{\gamma}_{2}) -A^{2}(f(\gamma_{2}(s)) +1 + y^{1}(s))ds}
\end{equation*}
otherwise. In order to show the existence of $\gamma_{2}$ for all $t \in \R$ we define $\alpha(t):=(\gamma_{2},\dot{\gamma}_{2})$ and
\begin{equation*}
	F(x_{1},\ldots,x_{2n}) := (x_{n},\ldots,x_{2n},\frac{A^{2}}{2}(\partial_{1}f + 1),
									\frac{A^{2}}{2}\partial_{2}f,\ldots,\frac{A^{2}}{2}\partial_{n}f).
\end{equation*}
Then (\ref{eq-compl-rn}) is equivalent to $\dot{\alpha} = F(\alpha)$.
Let $C:=\sup_{T^{n+1}}{|f|}+\sup_{T^{n+1}}{|\nabla f|}$. If $\alpha$ is not defined for all $t \in \R$ then it must leave any compact
set. However, $\alpha(t)=\alpha(t_{0})+\int_{t_{0}}^{t}{F(\alpha(s))ds}$ and Gronwall's lemma imply that $\alpha$ is bounded on
any $[t_{0},t_{1})$ since
\begin{equation*}
	\|F(x)\|^{2} \leq \sum_{j=1}^{n}{(x_{j+n})^{2}} + \frac{A^{4}}{4}(C^{2}+2C+1) \leq \|x\|^{2} + \frac{A^{4}}{4}(C^{2}+2C+1).
\end{equation*}
Hence, $\gamma_{2}$ is defined for all $t \in \R$.
\end{proof}
%
%
%
%    Bibliographies can be prepared with BibTeX using amsplain,
%    amsalpha, or (for "historical" overviews) natbib style.
\bibliographystyle{amsalpha}% or (ams)alpha
\nocite{*}
%    Insert the bibliography data here. %%Run latex then bibtex then latex%
\bibliography{sholconstr}
\end{document}